\begin{document}

%theorem environments
\newtheorem{thm}{Theorem}
\newtheorem{prop}[thm]{Proposition}
\newtheorem{conj}[thm]{Conjecture}
\newtheorem{lem}[thm]{Lemma}
\newtheorem{cor}[thm]{Corollary}
\newtheorem{axiom}[thm]{Axiom}
\newtheorem{sheep}[thm]{Corollary}
\newtheorem{deff}[thm]{Definition}
\newtheorem{fact}[thm]{Fact}
\newtheorem{example}[thm]{Example}
\newtheorem{slogan}[thm]{Slogan}
\newtheorem{remark}[thm]{Remark}
\newtheorem{quest}[thm]{Question}
\newtheorem{zample}[thm]{Example}

\newcommand{\sthat}{\hspace{.1cm}| \hspace{.1cm}}
%shortcuts for curve names
%\newcommand{\p}{\mathbb{P}^1}
%\newcommand{\ga}{\mathbb{G}_a}
%\newcommand{\gm}{\mathbb{G}_m}
%\newcommand{\spec}{ \mathit{Spec} }
\newcommand{\id}{\operatorname{id} }
\newcommand{\acl}{\operatorname{acl}}
\newcommand{\dcl}{\operatorname{dcl}}
\newcommand{\irr}{\operatorname{irr}}
\newcommand{\aut}{\operatorname{Aut}}
\newcommand{\fix}{\operatorname{Fix}}

\newcommand{\oo}{\mathcal{O}}
\newcommand{\aaa}{\mathcal{A}}
\newcommand{\mm}{\mathcal{M}}
\newcommand{\curg}{\mathcal{G}}
\newcommand{\bbf}{\mathbb{F}}
\newcommand{\A}{\mathbb{A}}
\newcommand{\R}{\mathbb{R}}
\newcommand{\Q}{\mathbb{Q}}
\newcommand{\C}{\mathbb{C}}
\newcommand{\cc}{\mathcal{C}}
\newcommand{\dd}{\mathcal{D}}
\newcommand{\N}{\mathbb{N}}
\newcommand{\Z}{\mathbb{Z}}
\newcommand{\cF}{\mathcal F}
\newcommand{\cB}{\mathcal B}
\newcommand{\cU}{\mathcal U}
\newcommand{\cV}{\mathcal V}
\newcommand{\cG}{\mathcal G}
\newcommand{\cD}{\mathcal D}
\newcommand{\curly}{\mathcal{C}}
\newcommand{\durly}{\mathcal{D}}
\newcommand{\fff}{\mathcal{F}}
\newcommand{\calc}{\mathcal{C}}
\newcommand{\GG}{\mathbb{G}}
\newcommand{\PP}{\mathbb{P}}
\newcommand{\Gal}{\mathrm{Gal}}
\newcommand{\Aut}{\mathrm{Aut}}
\newcommand{\signature}{\mathrm{sign}}

\definecolor{mypink3}{cmyk}{0, 0.7808, 0.4429, 0.1412}

\newcommand{\mahrad}[1]{{\color{blue} \sf $\clubsuit\clubsuit\clubsuit$ Mahrad: [#1]}}
\newcommand{\ramin}[1]{{\color{red}\sf $\clubsuit\clubsuit\clubsuit$ Ramin: [#1]}}
\newcommand{\ramintak}[1]{{\color{mypink3}\sf $\clubsuit\clubsuit\clubsuit$ Ramin2: [#1]}}
\newcommand{\ramintakloo}[1]{{\color{green} \sf $\clubsuit\clubsuit\clubsuit$ Ramin3 [#1]}}

\DeclareRobustCommand{\hlgreen}[1]{{\sethlcolor{green}\hl{#1}}}
\DeclareRobustCommand{\hlcyan}[1]{{\sethlcolor{cyan}\hl{#1}}}
\newcommand{\Fmodtor}{F^\times / \mu(F)}

\title{Counting Ideals in $\Z[t]/(f)$}
\author{Sarthak Chimni} 
\begin{abstract}
In this paper  we study the growth of ideals in $\Z[t]/(f)$ for a monic cubic polynomial $f$. We also compute the ideal zeta function of $\Z[t]/(t^n)$ for any $n \in \N$.
\end{abstract}
\maketitle
\section{Introduction}
Given a commutative ring R with identity whose additive group is isomorphic to $\Z^n$ for some $n \in \mathbb{N}$ we define
\[
a_R^{\iota}(k) =  |\{I \text{ ideal in } R \mid [R:S] = k\}|
\]

We define the ideal zeta function of $R$ to be
\[
\zeta_R^I(s) = \sum_{k=0}^{\infty} \frac{a_R^{\iota}(k)}{k^s}.
\]
Then we prove the following theorem:
\begin{thm} \label{main result}
For any $n \in \N$
\[
\zeta_{\Z[t]/(t^n)}^I(s) = \zeta(s) \zeta(2s-1) \zeta(3s-2) \cdots \zeta(ns - (n-1)).
\]

\end{thm}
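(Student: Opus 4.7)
The plan is to compute the ideal zeta function locally at each prime $p$ and then assemble via the Euler product
$$\zeta^I_{\Z[t]/(t^n)}(s) = \prod_p \zeta^I_{R_p}(s), \qquad R_p := \Z_p[t]/(t^n).$$
This decomposition is standard and follows from the Chinese Remainder Theorem applied to ideals of finite index. It reduces the theorem to the purely local claim
$$\zeta^I_{R_p}(s) = \prod_{k=1}^n \frac{1}{1 - p^{k-1-ks}}.$$

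To enumerate finite-index ideals of $R_p$, I would put each ideal $I$ in Hermite normal form with respect to the $\Z_p$-basis $1, t, \ldots, t^{n-1}$. Every such $I$ admits a unique basis
$$f_k = p^{e_k} t^k + \sum_{j=k+1}^{n-1} a_{k,j}\, t^j, \qquad k = 0, 1, \ldots, n-1,$$
with $0 \le a_{k,j} < p^{e_j}$. Expanding the requirement $tf_k \in I$ as a $\Z_p$-combination of $f_{k+1}, \ldots, f_{n-1}$ and comparing coefficients at each $t^j$ shows that $(f_0, \ldots, f_{n-1})$ spans an ideal if and only if (i) $e_0 \ge e_1 \ge \cdots \ge e_{n-1} \ge 0$ and (ii) for each $k+2 \le j \le n-1$ the off-diagonal entries satisfy a congruence of the form
$$a_{k,j-1} \equiv \sum_{l=k+1}^{j-1} c_{k,l}\, a_{l,j} \pmod{p^{e_j}},$$
in which the scalars $c_{k,l} \in \Z_p$ are defined recursively by earlier constraints (starting with $c_{k,k+1} = p^{e_k - e_{k+1}}$). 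The index is $[R_p : I] = p^{e_0 + e_1 + \cdots + e_{n-1}}$.

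The main obstacle is the combinatorial lemma: for a fixed partition $\lambda = (e_0 \ge \cdots \ge e_{n-1})$, the number $N_\lambda(p)$ of admissible $(a_{k,j})$ is exactly
$$N_\lambda(p) = p^{e_1 + e_2 + \cdots + e_{n-1}}.$$
The congruence indexed by $(k,j)$ has $a_{k,j-1}$ as its left-hand side, and these LHS positions cover precisely the off-diagonal slots $(k,i)$ with $k < i \le n-2$, exactly one congruence per slot. I would verify the lemma by showing this system is triangular: processing the congruences in order of increasing $j-k$ ensures that whenever $C_{k,j}$ is invoked, all $C_{k,l}$ with $l<j$ are already in force and hence the recursively defined $c_{k,l}$ genuinely lie in $\Z_p$; consequently $C_{k,j}$ cuts the $p^{e_i}$ naive choices for $a_{k,i}$ (with $i=j-1$) down to $p^{e_i - e_{i+1}}$, while each slot with $i=n-1$ remains free with $p^{e_{n-1}}$ choices. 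Telescoping
$$\sum_{i=1}^{n-2} i(e_i - e_{i+1}) + (n-1)e_{n-1} = \sum_{i=1}^{n-1} e_i$$
then gives the asserted count.

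With the count in hand, the rest is a change of variables. Setting $g_k := e_{k-1} - e_k$ for $1 \le k \le n-1$ and $g_n := e_{n-1}$ gives a bijection between partitions of length $\le n$ and tuples $(g_1, \ldots, g_n) \in \N^n$, with $\sum_{j=0}^{n-1} e_j = \sum_{k=1}^n k g_k$ and $\sum_{j=1}^{n-1} e_j = \sum_{k=1}^n (k-1) g_k$. Therefore
$$\zeta^I_{R_p}(s) = \sum_{g \in \N^n} p^{\sum_k (k-1) g_k}\, p^{-s \sum_k k g_k} = \prod_{k=1}^n \frac{1}{1 - p^{k-1-ks}},$$
and multiplying over all primes $p$ yields the stated identity for $\zeta^I_{\Z[t]/(t^n)}(s)$.
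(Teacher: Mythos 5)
Your proposal is correct, and it takes the route the paper alludes to in the parenthetical remark after Theorem \ref{main result} but does not carry out: a direct Hermite-normal-form count over $\Z_p$, rather than the Grunewald--Segal--Smith $p$-adic integral used in Section \ref{section 3}. The combinatorial core is the same in both treatments --- the single condition $t f_k \in I$ (equivalently $v_t\cdot v_j \in G_{M_{j-1}}$) suffices because it propagates to all of $R\cdot I$, it forces the pivot valuations to be monotone, and it constrains each remaining off-diagonal slot by exactly one congruence modulo the pivot below it --- and indeed your count $N_\lambda(p)=p^{e_1+\cdots+e_{n-1}}$ agrees with the paper's $p^{(n-1)b_1+\cdots+b_{n-1}}\mu_p(b_1,\dots,b_n)=p^{b_1+\cdots+b_{n-1}}$ under $e_k=b_{n-k}$. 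What your version buys is the avoidance of Haar-measure bookkeeping (the factor $(1-p^{-1})^{-n}$ and the weights $|x_{jj}|^{s-n+j}$), and your substitution $g_k=e_{k-1}-e_k$ factors the resulting sum over partitions in one line, replacing the paper's inductive Lemma \ref{summation lemma}; the paper's integral formalism, by contrast, is the one that generalizes to Section \ref{cubic}. One point to tighten: processing the congruences ``in order of increasing $j-k$'' is not by itself a valid elimination order, since the congruence $(k,j)$ constraining $a_{k,j-1}$ has $a_{k+1,j}$ on its right-hand side, and that variable is constrained by $(k+1,j+1)$, which lies on the same diagonal. The system is nonetheless genuinely triangular: every quantity appearing in congruence $(k,j)$ --- the scalars $c_{k,l}$ with $l<j$ and the entries $a_{l,j}$ with $l>k$ --- involves only slots in the same row with strictly smaller column or slots in a strictly lower row, so processing the generators $f_{n-1},f_{n-2},\dots,f_0$ in that order, and within each row the columns from left to right, makes each congruence determine one fresh variable modulo $p^{e_j}$ out of $p^{e_{j-1}}$ admissible values. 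With that ordering fixed, your count, the telescoping identity, and the geometric-series evaluation of the local factor are all correct.
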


(It has been brought to my notice by Michael Schein that Theorem \ref{main result} is Corollary 4.3 from \cite{Rossmann} and can also be proven by counting Hermite Normal Forms of matrices. The proof is left in detail as it is more elementary than that of  \cite{Rossmann}.) \\
It is easy to see that $\zeta_{\Z[t]/(t^n)}^I(s)$ has a pole at $s=1$ of order $n$. An application of a standard Tauberian Theorem then gives the following result.
\begin{cor} \label{growth}
Let c = $\frac{1}{n!(n-1)!}$ then
\begin{equation}
\sum_{k \leq B} a_{\Z[t]/(t^n)}^{\iota}(k) \sim cB (\log B)^{n-1}.
\end{equation}
\end{cor}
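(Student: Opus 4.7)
The strategy is a direct application of a Tauberian theorem to the Dirichlet series $\zeta_{\Z[t]/(t^n)}^I(s)$, whose closed form is supplied by Theorem \ref{main result}. Since the coefficients $a_{\Z[t]/(t^n)}^{\iota}(k)$ are non-negative integers, one is in the standard setting of Tauberian theorems of Wiener--Ikehara--Delange type, which convert information about the pole of a Dirichlet series at $s=1$ into asymptotics for its partial sums.

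The first step is to pin down the leading singular behavior at $s=1$. Using $\zeta(u) = \frac{1}{u-1} + O(1)$ near $u = 1$ and the substitution $u = js - (j-1)$, which gives $u - 1 = j(s-1)$, each factor in the product
\[
\zeta(s)\,\zeta(2s-1)\,\zeta(3s-2)\cdots \zeta(ns-(n-1))
\]
contributes a simple pole at $s=1$ with $\zeta(js-(j-1)) \sim \frac{1}{j(s-1)}$. Multiplying these over $j = 1, 2, \dots, n$ yields
\[
\zeta_{\Z[t]/(t^n)}^I(s) \sim \frac{1}{n!\,(s-1)^n} \qquad (s \to 1),
\]
so the pole is of order $n$ with leading coefficient $A = 1/n!$.

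The second step is to invoke the Tauberian theorem, which asserts that for a Dirichlet series $\sum a_k / k^s$ with non-negative coefficients, meromorphic in a neighborhood of the closed half-plane $\Re s \geq 1$ with a single pole of order $n$ at $s=1$ of leading coefficient $A$, one has
\[
\sum_{k \leq B} a_k \sim \frac{A}{(n-1)!}\,B\,(\log B)^{n-1}.
\]
Plugging in $A = 1/n!$ gives precisely $c = \frac{1}{n!(n-1)!}$ as claimed.

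The only hypothesis requiring attention is the boundary behavior on the line $\Re s = 1$ away from $s = 1$: for such $s$ one has $\Re(js - (j-1)) = 1$, so every factor is evaluated on the critical line $\Re u = 1$ of $\zeta$, and the necessary regularity reduces to the classical non-vanishing of $\zeta(u)$ on that line. There is no real obstacle here; the substantive content has already been packaged into Theorem \ref{main result}, and the corollary is a textbook deduction once the residue is computed.
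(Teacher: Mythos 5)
Your argument is correct and is precisely the route the paper takes: it reads off from Theorem \ref{main result} that $\zeta_{\Z[t]/(t^n)}^I(s)\sim \frac{1}{n!}(s-1)^{-n}$ as $s\to 1$ and then applies a standard Delange-type Tauberian theorem, which with $A=1/n!$ and $\Gamma(n)=(n-1)!$ yields $c=\frac{1}{n!(n-1)!}$ exactly as you computed. One minor remark: the non-vanishing of $\zeta$ on the line $\Re u=1$ is not actually needed, since the zeta factors occur only in the numerator; holomorphy of each $\zeta(js-(j-1))$ on $\Re s=1$ away from $s=1$ already supplies the regularity the Tauberian theorem requires.
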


More generally for any monic polynomial $f \in \Z[t]$, denote $\Z[t]/(f)$ by $\Z_f$. Then the authors in \cite{FKS} also conjecture that if $f = g_1^{m_1}g_2^{m_2} \cdots g_k^{m_k}$ where $g_i$ is irreducible over $\Z[t]$ then $\zeta_{\Z_f}^I (s)$ has a pole at $s=1$ of order $\sum_{i=1}^{k} m_i$. We prove this for the case of cubic polynomials in the following theorem:

\begin{thm} \label{main 2}
Let $f$ be a monic cubic polynomial in $\Z[t]$, then $\zeta_{\Z_f}^I(s)$ converges for $\mathcal{R}(s) > 1$ and has a pole at $s=1$. Let $m_f$ denote the order of the pole of $\zeta_{\Z_f}^I$. Then $m_f$ is equal to the number of irreducible factors counted with multiplicity of $f$ in $\Z[t]$.
\end{thm}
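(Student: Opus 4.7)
Write $f = g_1^{m_1} g_2^{m_2} \cdots g_k^{m_k}$ as a product of distinct monic irreducibles in $\Z[t]$, and set $R := \Z_f$ and $R' := \prod_{i=1}^{k} \Z[t]/(g_i^{m_i})$. Both ideal zeta functions admit Euler products $\prod_p L_p(s)$, since ideals of finite index in a Noetherian $\Z$-algebra decompose uniquely into $p$-primary components; and by multiplicativity across the direct product one has $\zeta_{R'}^I(s) = \prod_{i=1}^{k} \zeta_{\Z[t]/(g_i^{m_i})}^I(s)$. The natural ring map $R \to R'$ given by $[P] \mapsto ([P \bmod g_i^{m_i}])_i$ is injective with finite cokernel (the $g_i^{m_i}$ are pairwise coprime in $\Q[t]$, and $\Z[t]$ is a UFD), and after tensoring with $\Z_p$ it becomes an isomorphism precisely when every pair $g_i, g_j$ ($i \neq j$) is comaximal in $\Z_p[t]$, equivalently when $p \nmid \operatorname{Res}(g_i, g_j)$. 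Hence the local Euler factors $L_p^R(s)$ and $L_p^{R'}(s)$ coincide at all but finitely many primes, and $\zeta_R^I(s)/\zeta_{R'}^I(s)$ is a finite product of rational functions in the various $p^{-s}$.

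I would then compute the pole order of each $\zeta_{\Z[t]/(g_i^{m_i})}^I(s)$ at $s=1$. Because $\deg f = 3$, each pair $(g_i, m_i)$ falls into exactly one of two cases: (a) $g_i$ is linear with $m_i \in \{1, 2, 3\}$, or (b) $g_i$ is irreducible of degree $2$ or $3$ and $m_i = 1$. In case (a) the substitution $t \mapsto t - a_i$ identifies $\Z[t]/(g_i^{m_i})$ with $\Z[t]/(t^{m_i})$, and Theorem \ref{main result} gives the explicit factorization $\prod_{j=1}^{m_i}\zeta(js-(j-1))$, which has a pole of order exactly $m_i$ at $s=1$. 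In case (b), $\Z[t]/(g_i)$ is a $\Z$-order in the number field $K_i := \Q[t]/(g_i)$ that coincides with the maximal order $\oo_{K_i}$ at all but the finitely many primes dividing the conductor $[\oo_{K_i} : \Z[t]/(g_i)]$; its ideal zeta function therefore differs from the Dedekind zeta $\zeta_{K_i}(s)$ only in finitely many local factors, and inherits the simple pole of $\zeta_{K_i}$ at $s=1$.

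To finish I must verify that no bad local factor creates or cancels a pole at $s=1$. Each such factor is the ideal-counting Dirichlet series $L_p(s) = \sum_{n \geq 0} a_n p^{-ns}$ of some $\Z_p$-algebra of $\Z_p$-rank $d \leq 3$, in which $a_n$ is bounded by the number of $\Z_p$-sublattices of $\Z_p^d$ of index $p^n$ and therefore grows only polynomially, as $O(n^{d-1})$. It follows that $L_p(s)$ converges absolutely for $\operatorname{Re}(s) > 0$, and $L_p(1) \in [1, \infty)$ since $a_0 = 1$ and all $a_n \geq 0$; in particular every bad factor is holomorphic and non-vanishing at $s=1$. Combining this with the contributions from cases (a) and (b) shows that $\zeta_R^I(s)$ has a pole at $s=1$ of order exactly $\sum_{i=1}^k m_i$. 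The main technical obstacle I anticipate is the case (b) bookkeeping -- identifying the conductor of $\Z[t]/(g_i)$ in $\oo_{K_i}$ when $g_i$ is an irreducible quadratic or cubic, and explicitly verifying the equality of local Euler factors outside this conductor -- which is standard in principle but must be done carefully to rule out accidental cancellation.
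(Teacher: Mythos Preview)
Your approach via the Chinese Remainder comparison $R \hookrightarrow R' = \prod_i \Z[t]/(g_i^{m_i})$ is genuinely different from the paper's. The paper invokes Theorem~\ref{FKS main} from \cite{FKS} for the separable case and then treats the one remaining shape $f = t^2(t-\lambda)$ by a direct $p$-adic computation of every local factor (Proposition~\ref{lambda} and Lemmas~\ref{coprime}--\ref{not coprime}); you instead reduce uniformly to the pieces $\Z[t]/(g_i^{m_i})$, which are handled by Theorem~\ref{main result} (linear $g_i$) or by comparison with a Dedekind zeta function (irreducible $g_i$ of degree $2$ or $3$). Your route is more structural, recovers the separable cubic case without citing \cite{FKS}, and visibly generalises beyond degree three.

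There is, however, a real error in your treatment of the bad Euler factors. You assert that the number $a_n$ of ideals of index $p^n$ in a rank-$d$ $\Z_p$-algebra is $O(n^{d-1})$ because it is bounded by the sublattice count; but the number of sublattices of $\Z_p^d$ of index $p^n$ grows exponentially in $n$ (already for $d=2$ it equals $1+p+\cdots+p^n$), so this bound does not give convergence of $L_p(s)$ anywhere near $s=1$. This is precisely the point at which the paper does actual work: Lemma~\ref{not coprime} shows by an explicit volume estimate that the bad local factor for $p\mid\lambda$ converges for $\mathcal{R}(s)>1/2$. Your outline can be repaired without that computation by exploiting the finite-index inclusion $R_p \hookrightarrow R'_p$ directly: if $p^e R'_p \subseteq R_p$, the extension map $I \mapsto I R'_p$ from ideals of $R_p$ to ideals of $R'_p$ has fibres of size bounded in terms of $e$ alone and distorts the index by at most a bounded power of $p$, so the already-established finiteness of $L_p^{R'}(1)$ transfers to $L_p^{R}(1)$. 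With this patch in place the rest of your plan goes through.
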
 

In fact the authors of \cite{FKS} believe that this holds for polynomials of any degree $n$. The ideal zeta function for $\Z[t]$ is computed by Segal in \cite{Segal} where he proves the following theorem in which the equality is to be interpreted as an identity of formal Dirichlet Series. 

\begin{thm}
Let $S$ be a Dedekind domain, not a field, having only finitely many ideals of each finite index. Let $R = S[t]$. Then R has only finitely many ideals of each finite index, and 

\begin{equation}
\zeta_R^I(s) = \prod_{j=1}^{\infty} \zeta_S^I(js-j)
\end{equation}
\end{thm}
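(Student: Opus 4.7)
The plan is to prove this identity by reducing to a local calculation at each prime of $S$ and then enumerating finite-index ideals of $S[t]$ combinatorially via a Hermite normal form. First I would establish the Euler-product factorizations
\[
\zeta_R^I(s) = \prod_{\mathfrak{p}} \zeta_{R_{\mathfrak{p}}}^I(s), \qquad \zeta_S^I(s) = \prod_{\mathfrak{p}} \zeta_{S_{\mathfrak{p}}}^I(s),
\]
where $\mathfrak{p}$ ranges over the nonzero prime ideals of $S$ and $R_{\mathfrak{p}} = S_{\mathfrak{p}}[t]$. These follow from the fact that any finite quotient $R/I$ is Artinian and therefore decomposes uniquely as a product of local rings, each supported at a single prime of $S$, and this decomposition is compatible with localization. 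Since passing to the completion of $S_{\mathfrak{p}}$ preserves finite quotients, it suffices to prove the identity when $S$ is a complete DVR with uniformizer $\pi$ and residue field of size $q$.

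Next I would set up the Hermite normal form. For a finite-index ideal $I$ of $S[t]$, let $n$ be the smallest degree of a monic polynomial in $I$, and for $0 \leq i < n$ let $a_i$ be the minimum $\pi$-valuation among leading coefficients of polynomials of degree $i$ in $I$. Because multiplication by $t$ raises the degree by one while preserving the leading-coefficient valuation, the sequence $(a_0,\ldots,a_{n-1})$ is a partition $\lambda$ with $a_0 \geq \cdots \geq a_{n-1} > 0$. Choosing polynomials $f_0,\ldots,f_n$ with $f_i$ of degree $i$ and leading coefficient $\pi^{a_i}$ (so $f_n$ is monic) produces a triangular generating set for $I$, and a short composition-series argument gives $[S[t]:I] = q^{|\lambda|}$.

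The central step, and the main obstacle, is to show that the number of ideals with a given partition $\lambda$ is exactly $q^{|\lambda|}$. The subdiagonal coefficients of the $f_i$ range a priori in the quotients $S/\pi^{a_j}$, but the requirement that the $S$-module they span be closed under multiplication by $t$ imposes nontrivial congruences among them. By induction on $n$, or equivalently by reinterpreting the count as the number of cyclic $S[t]$-module structures compatible with the prescribed HNF presentation, one verifies that exactly $q^{|\lambda|}$ ideals correspond to each $\lambda$. This bookkeeping is the delicate part of the argument.

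Once the count is in hand, the local zeta function becomes
\[
\zeta_{S[t]}^I(s) \;=\; \sum_{\lambda} q^{|\lambda|} \cdot q^{-|\lambda| s} \;=\; \sum_{\lambda} q^{|\lambda|(1-s)} \;=\; \prod_{j=1}^{\infty} \frac{1}{1 - q^{j(1-s)}} \;=\; \prod_{j=1}^{\infty} \zeta_S^I(js - j),
\]
where the third equality is Euler's identity for the partition generating function and the last uses $\zeta_S^I(s) = (1-q^{-s})^{-1}$ for the DVR $S$. Reassembling the Euler products over primes $\mathfrak{p}$ then yields the claimed identity for general Dedekind $S$.
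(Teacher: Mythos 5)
A preliminary remark: the paper offers no proof of this statement at all --- it is quoted verbatim from Segal \cite{Segal} --- so there is nothing internal to compare against. Your strategy (Euler factorization of $\zeta_R^I$ over the primes $\mathfrak{p}$ of $S$, reduction to a complete DVR with residue cardinality $q$, attaching to each finite-index ideal the partition $\lambda=(a_0\ge\cdots\ge a_{n-1})$ of leading-coefficient valuations, and then invoking Euler's identity $\sum_\lambda x^{|\lambda|}=\prod_{j\ge1}(1-x^j)^{-1}$ with $x=q^{1-s}$) is in substance the standard route, and is essentially Segal's own; the reductions, the monotonicity of the $a_i$, the index formula $[R:I]=q^{|\lambda|}$, and the final reassembly of Euler products are all correct. (You should also say a word about the first assertion of the theorem --- finiteness of the number of ideals of each index --- which follows from your local counts together with the observation that only finitely many $\mathfrak{p}$ can support an ideal of a given index.)

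The genuine gap is the step you yourself flag: the claim that exactly $q^{|\lambda|}$ ideals realize a given partition $\lambda$. This is not ``bookkeeping'' surrounding the proof; it \emph{is} the proof, since every other step is formal, and as written you supply no mechanism for it. Concretely, when you build the triangular generators $f_0,\dots,f_n$ degree by degree, the subleading coefficients of $f_i$ a priori range over a quotient whose size is governed by the $S$-span of $f_0,\dots,f_{i-1}$ (so the naive parameter count is strictly larger than $q^{|\lambda|}$), while $t$-stability of that span imposes genuine congruences cutting the count down. For instance, for $\lambda=(2,1)$ over $\Z_p$ one finds that $t f_1\in I$ forces both a divisibility condition on the constant term of $f_1$ and a congruence mod $p$ relating the coefficients of $f_2$ to those of $f_1$, and only after these cancellations does the count land on $p^3=q^{|\lambda|}$. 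What is needed is an actual induction on the degree: show that, given admissible $f_0,\dots,f_{i-1}$, the admissible choices of $f_i$ form a coset of a subgroup whose order you compute, and that the product of these orders telescopes to $q^{|\lambda|}$. Until that lemma is established, the identity has been verified only at the level of generating-function formalism, not proved.
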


\

The general theory to study these zeta functions using p-adic integration techiques was introduced by Grunewald, Segal and Smith in \cite{GSS}. They show that $\zeta_R(s)$ can be expressed as an Euler product of rational functions of $p^{-s}$ over all primes $p$. In \cite{KMT} Kaplan, Marcinek and Takloo-Bighash study subring growth of $\Z^n$ and more generally the distribution of orders in number fields by locating the rightmost poles of these zeta functions instead of computing them explicitly. I use these ideas in the proofs in this paper.

\

The paper is organised is as follows. In Section \ref{p-adic} we introduce the p-adic integration techniques we use in our proofs. We prove Theorem \ref{main result} in Section \ref{section 3} and Theorem \ref{main 2} in Section \ref{cubic}.

\section{P-adic setting} \label{p-adic}
Let $R$ be a commutative ring with identity whose additive group is isomorphic to $\Z^n$ for some $n \in \mathbb{N}$. Then the following theorem is a summary of results from \cite{GSS}: 
\begin{thm} \label{GSS}
1. \space The series $\zeta_R^I(s)$ converges in some right half plane of $\mathbb{C}$. The abscissa of convergence $\alpha_R^I$ of $\zeta_R^<(s)$ is a rational number. There is a $\delta > 0$ such that $\zeta_R^I(s)$ can be meromorphically continued to the domain $\{s \in \mathbb{C} \mid \mathcal{R}(s) > \alpha_R^I - \delta\}$. Furthermore, the line $\mathcal{R}(s) = \alpha_R^I$ contains at most one pole of $\zeta_R^<Is)$ at the point $ s = \alpha_R^I$.\\\\
2. \space There is an Euler product decomposition 

\begin{equation}
\zeta_R^I(s) = \prod_p \zeta_{R,p}^I(s)
\end{equation}
with the local Euler factor given by 
\begin{equation*}
\zeta_{R,p}^I(s) = \sum_{l=0}^{\infty} \frac{a_R^{\iota}(p^l)}{p^{ls}}.
\end{equation*}
This local factor is a rational function of $p^{-s}$; there are polynomials $P_p, Q_p \in \Z[x]$ such that $\zeta_{R,p}^I(s) = P_p(p^{-s})/Q_p(p^{-s})$. The polynomials $P_p, Q_p$ can be chosen to have bounded defree as p varies. 
\end{thm}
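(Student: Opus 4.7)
The plan is to combine the multiplicativity of the ideal-counting function with the $p$-adic integration machinery underlying \cite{GSS}. Part 1 of the theorem is analytic and falls out once the local factors are shown to be rational functions of $p^{-s}$ of bounded degree; part 2 is where the real technical work sits.

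First I would establish the Euler product. Any ideal $I \subset R$ of finite index $k$ decomposes, via the Chinese Remainder Theorem applied to $R/I$, into a product of its $p$-local components, and conversely ideals in the completion $R_p := R \otimes_{\Z} \Z_p$ of index $p^l$ correspond bijectively to $R$-ideals of $p$-power index $p^l$. Hence $a_R^{\iota}$ is multiplicative and the Dirichlet series factors into the claimed Euler product.

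Next I would realise each local factor as a $p$-adic integral. Fixing a $\Z_p$-basis of $R_p$, a full-rank sublattice $L \subset R_p$ is presented by a matrix $M \in M_n(\Z_p)$ in upper-triangular Hermite normal form, and $[R_p:L] = |\det M|_p^{-1}$. The condition that $L$ be closed under multiplication by $R_p$ translates into the requirement $M^{-1} c_i M \in M_n(\Z_p)$ for each of the structure-constant matrices $c_1,\dots,c_n$ of $R_p$; this is a first-order condition in the Macintyre language and cuts out a definable set $\cV \subset M_n(\Z_p)$. Integrating against additive Haar measure $\mu$ one obtains
\[
\zeta_{R,p}^I(s) \;=\; \frac{1}{(1-p^{-1})^n}\int_{\cV} |m_{11}|_p^{s-1} |m_{22}|_p^{s-2} \cdots |m_{nn}|_p^{s-n}\, d\mu.
\]
Rationality of such an integral as a function of $p^{-s}$ is then Denef's theorem on $p$-adic integrals over definable sets, and the uniform-in-$p$ version of that theorem delivers the degree bound on $P_p$ and $Q_p$.

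Finally I would deduce the analytic statements of part 1. Polynomial growth of $a_R^{\iota}(k)$ is immediate from the Hermite parametrisation and yields convergence in some right half-plane; rationality of the abscissa $\alpha_R^I$ and meromorphic continuation a little past it follow by extracting from the Euler product a finite number of ``virtual'' Riemann zeta factors $\zeta(a_j s - b_j)$ whose combined pole matches the rightmost singularity of the global series, a standard manoeuvre for Dirichlet series with uniformly rational local factors. Uniqueness of the pole on the line $\mathcal{R}(s) = \alpha_R^I$ is then an Ikehara-type Tauberian consequence of the positivity of the coefficients. The main obstacle, both conceptually and technically, is the $p$-adic integral realisation together with the uniform-in-$p$ rationality statement from Denef's theorem; everything else is either elementary or standard analytic number theory.
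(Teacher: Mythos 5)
First, note that the paper itself offers no proof of this statement: Theorem \ref{GSS} is quoted as background, with Part 2 coming from \cite{GSS} and the finer analytic claims of Part 1 (rationality of $\alpha_R^I$, meromorphic continuation past the abscissa, and the uniqueness of the pole on the line $\mathcal{R}(s)=\alpha_R^I$) really due to du Sautoy and Grunewald's later work. Your outline of Part 2 --- multiplicativity of $a_R^{\iota}$ via the Chinese Remainder Theorem and localisation, the Hermite/triangular parametrisation of finite-index sublattices, the first-order ideal condition cutting out a definable set, and rationality plus bounded degree via the uniform version of Denef's theorem --- is essentially the argument of \cite{GSS} and is sound (modulo a bookkeeping slip in the exponents of your integrand: with the paper's lower-triangular convention the $i$-th diagonal entry carries $|x_{ii}|^{s-n+i}$, and your upper-triangular version should be the mirror image of this, not $s-i$).

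The genuine gap is in Part 1. You dismiss rationality of the abscissa, the continuation to $\mathcal{R}(s)>\alpha_R^I-\delta$, and the uniqueness of the pole on the boundary line as ``a standard manoeuvre'' of extracting virtual zeta factors from the Euler product. Bounded degrees of $P_p$ and $Q_p$ are nowhere near enough for this: to peel off factors $\zeta(a_js-b_j)$ one must control how the \emph{coefficients} of $P_p$ and $Q_p$ vary with $p$, and this is the hard content of du Sautoy--Grunewald. Their proof expresses the local factors as cone integrals, uses resolution of singularities to write them as $\sum_i c_{i,p}\,Z_i(p,p^{-s})$ with $c_{i,p}$ counting $\bbf_p$-points on the reduction of fixed varieties, and then invokes Lang--Weil/Deligne bounds to control the $c_{i,p}$; only then does the extraction of zeta factors and the continuation go through. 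Your Tauberian argument for the last clause also does not work as stated: positivity of the coefficients gives, via Landau's theorem, that the real point $s=\alpha_R^I$ is a singularity, but it does not exclude further poles elsewhere on the line $\mathcal{R}(s)=\alpha_R^I$; that exclusion again comes from the explicit shape of the continued function (a finite sum of products of translated Riemann zeta factors, whose only poles are at real points, times functions holomorphic in a larger half-plane). So the proposal proves Part 2 and the convergence statement, but the remaining claims of Part 1 require substantially more than what is sketched.
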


By a theorem of Voll \cite{Voll}, the local Euler factors satisfy functional equations. The paper \cite{GSS} introduced a $p$-adic formalism to study the local Euler factors $\zeta_{R,p}^<(s)$. Fix a $\Z$-basis for $R$ and identify $R $ with $\Z^n$. The multiplication in $R$ is given by a bi-additive map
\[
\beta : \Z^n \times \Z^n \to \Z^n
\]
which extends to a bi-additive map
\[
\beta_p : \Z_p^n \times \Z_p^n \to \Z_p^n
\]
giving $R_p = R \otimes_{\Z} \Z_p$ the structure of a $\Z_p$ -algebra. 

\

Let $\mathcal{M}_p(\beta)$ be the subset of the set of $n \times n$ lower triangular matrices $M$ with entries in $\Z_p$ such that if the rows of $M = (x_{ij})$ are denoted by $v_1, \dots v_n$,  and then for all $j$ satisfying $1 \leq j \leq n$ and for all $v\in \Z_p^n$, there are $p$-adic integers $c_{1}^v, \dots c_{n}^v$
\begin{equation} \label{multiplicativity}
\beta_p(v,v_j) = \sum_{k=1}^n c_{k}^vv_k.
\end{equation}
Let $dM$ be the normalized additive Haar measure on $T_n(\Z_p)$, the set of $ n \times n$ lower triangular matrices with entries in $\Z_p$. Proposition 3.1 of \cite{GSS} says :
\begin{equation} \label{p-adic integral}
\zeta^I_{R,p}(s) = (1-p^{-1})^{-n} \int_{\mathcal{M}_p(\beta)} |x_{11}|^{s-n+1} |x_{22}|^{s-n+2}\cdots |x_{n,n}|^{s}dM.
\end{equation}

\

We now apply these considerations to the case of $\Z_f = \Z[t]/(f)$ where $f$ is a polynomial of degree $n$ in $\Z[t]$. We fix $B = \{t^{n-1}, \dots, t,1\}$ as an ordered basis for $\Z_f$ as a lattice. So that $t^{n-j}$ corresponds to $e_j$ where $e_j$ denotes the $j^{\text{th}}$ standard basis vector. Then the product of two vectors $v \cdot w$ is the vector representing the product of the corresponding polynomials in $Z_f$ in basis $B$. By Theorem \ref{GSS} there exists an Euler product decomposition  
\[
\zeta_{\Z_f}^I(s) = \sum_{m=1}^{\infty} \frac{a_{\Z_f}^{\iota}(m)}{m^s} =  \prod_{p \text{ prime}} \zeta_p^I(\Z_f,s)
\]
where 
\[
\zeta_p^I(\Z_f,s) = \sum_{k=1}^{\infty} \frac{a_{k,p}^{\iota}}{p^{ks}}.
\]
Here $a_{k,p}$ counts the number of ideals of $\Z_f$ of index $p^k$.  

\

Let $v_f$ be the vector corresponding to $f$ in basis $B$. We define $\mathcal{M}_f(p)$ be the subset of the set of $n \times n$ lower triangular matrices $M$ with entries in $\Z_p$ such that if the rows of $M = (a_{ij})$ are denoted by $v_1, \dots v_n$, then for all $j$ satisfying $1 \leq j \leq n$, there are $p$-adic integers $\{c_{kj}\}_{k=1}^n$ such that 
\begin{equation} \label{ideal condition}
v_t \cdot v_j = \sum_{k=1}^n c_{kj}v_k
\end{equation}
Observe that $M \in \mathcal{M}_f(p)$ if and only if the rows of $M$ generate an ideal in $\Z_f$. Using Equation (\ref{p-adic integral}) we find that the local factors are given by
\[
\zeta_p^I(\Z_f,s) = (1-p^{-1})^{-n} \int_{\mathcal{M}_f(p)} |a_{11}|^{s-n+1} |a_{22}|^{s-n+2}\cdots |a_{n,n}|^{s}dM.
\]
 In order to compute these local factors we need the following definition.
\begin{deff} \label{domain}
Let $(b_1, b_2, \dots, b_n)$ be a n-tuple of non negative integers, we set
\[
\mathcal{M}_f(p;b_1,\dots,b_n) = \Bigg\{ M = 
\begin{bmatrix}
p^{b_1}  &  0     &   \dots   &  0   &     \\
a_{21}   &   p^{b_2} & \dots & \vdots   &  \\
\vdots   &     & \ddots  &    0 \\
a_{n1}   &   \dots  &       &   p^{b_n}
\end{bmatrix}
\in \mathcal{M}_f(p) \Bigg\}.
\]
\end{deff}

Let $\mu_p^f(b_1, \dots, b_n)$ be the volume of $\mathcal{M}_f(p;b_1,\dots,b_n)$ as a subset of $\Z_p^{\frac{n(n+1)}{2}}$. It follows from equation (\ref{p-adic integral}) that 
\begin{equation} \label{local factor 1}
\zeta_p^I(\Z_f,s) = \sum_{b_i = 0}^{\infty} \frac{p^{(n-1)b_1 + (n-2)b_2 + ... + 2b_{n-2} + b_{n-1}}}{p^{(b_1+ \dots +b_n)s}} \mu_p^f(b_1, \dots, b_n).
\end{equation}

\section{Proof of Theorem 1} \label{section 3}

We now prove Theorem \ref{main result}. In this section $f(t) = t^n$ and  $\mathcal{M}_f(p;b_1,\dots,b_n)$ is denoted by $\mathcal{M}_n(p;b_1,\dots,b_n)$ and $\mu_p^f(b_1, \dots, b_n)$ is denoted by $\mu_p(b_1, \dots, b_n)$.

We use equation (\ref{ideal condition}) to compute $\mu_p(b_1, \dots, b_n)$. Define $G_M$ be the subgroup of $\Z_p^n$ generated by the rows of $M$. Then we use the following standard result to find $\mu_p(b_1, \dots, b_n)$.

\begin{lem} \label{determinant}
Let $M \in \mathcal{M}_n(p;b_1,\dots,b_n)$, then $\mu_p(G_M) = p^{-(b_1+ \cdots +b_n)}$.
\end{lem}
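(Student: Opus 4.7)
The plan is to compute $\mu_p(G_M)$ by viewing $G_M$ as the image of $\Z_p^n$ under a $\Z_p$-linear map and then invoking the transformation law for $p$-adic Haar measure.

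First I would observe that if we write elements of $\Z_p^n$ as row vectors and denote by $T \colon \Z_p^n \to \Z_p^n$ the $\Z_p$-linear map $\lambda \mapsto \lambda M$, then by definition $G_M = T(\Z_p^n)$ is just the image of this map. Because $M$ is lower triangular with diagonal entries $p^{b_1}, \dots, p^{b_n}$, we have
\[
\det M = p^{b_1 + b_2 + \cdots + b_n} \neq 0,
\]
so $T$ is injective and $G_M$ is a full-rank $\Z_p$-submodule of $\Z_p^n$.

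Next I would apply the standard transformation rule for Haar measure under a linear endomorphism of $\Z_p^n$: if $\mu_p$ is the Haar measure normalized so that $\mu_p(\Z_p^n) = 1$, then
\[
\mu_p(T(\Z_p^n)) = |\det T|_p \cdot \mu_p(\Z_p^n) = |p^{b_1+\cdots+b_n}|_p = p^{-(b_1+\cdots+b_n)}.
\]
Equivalently, one can argue via Smith normal form: write $M = U D V$ with $U, V \in GL_n(\Z_p)$ and $D$ diagonal with entries $p^{c_i}$; since $U, V$ are measure preserving, the index $[\Z_p^n : G_M] = \prod p^{c_i}$ equals $p^{\sum b_i}$ (because $\det M$ is a unit multiple of $\prod p^{c_i}$), and by translation invariance $\mu_p(G_M) = 1/[\Z_p^n : G_M]$.

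There is essentially no obstacle here; the only points that require a moment's care are (i) computing $\det M$ correctly from triangularity and (ii) citing the transformation rule for $p$-adic Haar measure under a $\Z_p$-linear map with nonzero determinant, which is completely standard.
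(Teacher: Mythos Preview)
Your argument is correct; the paper itself does not prove this lemma but simply labels it a ``standard result,'' and the justification you give via the transformation rule $\mu_p(T(\Z_p^n)) = |\det T|_p$ (or equivalently the index computation via Smith normal form) is exactly the standard proof one has in mind.
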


\begin{prop} \label{Volume}
\[
\mu_p(b_1,\dots,b_n) = 
\begin{cases}
p^{-((n-2)b_1 + (n-3)b_2 + \dots + b_{n-2})} & b_1 \leq \cdots \leq b_n   \\
0 & \text{otherwise} 
\end{cases}
\]
\end{prop}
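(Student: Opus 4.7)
The plan is to translate the ideal condition into a matrix equation $TM = CM$ with $C \in M_n(\Z_p)$ unknown, solve for $C$ triangularly, and read off the constraints on $(b_1, \dots, b_n)$ and on the below-diagonal entries $a_{jk}$; then integrate over the remaining free parameters. Let $T$ denote the $n \times n$ matrix of multiplication by $t$ on $\Z_p[t]/(t^n)$ in the basis $B$, so that for a row vector $v = (v^{(1)}, \dots, v^{(n)})$ one has $Tv = (v^{(2)}, v^{(3)}, \dots, v^{(n)}, 0)$. The row span of $M$ is an ideal precisely when each row of $TM$ lies in the row span of $M$, i.e.\ when $TM = CM$ for some $C \in M_n(\Z_p)$; since $\det M = p^{b_1 + \cdots + b_n} \neq 0$, such a $C$ is uniquely determined over $\Q_p$, so the task is to decide when its entries in fact lie in $\Z_p$.

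The first step is the observation that in row $j$ of $TM$ the entries at positions $j, j+1, \dots, n$ all vanish. Solving the equations at these positions from $k = n$ down to $k = j$, and using the lower triangular structure of $M$ together with $M_{kk} = p^{b_k}$, one inductively obtains $c_{j,n} = c_{j,n-1} = \cdots = c_{j,j} = 0$, so $C$ is strictly lower triangular. Next I would solve for the remaining entries $c_{j,k}$ with $k < j$, descending from $k = j-1$. The equation at $k = j-1$ gives $c_{j, j-1} = p^{b_j - b_{j-1}}$, which lies in $\Z_p$ exactly when $b_{j-1} \leq b_j$; quantifying over $j \geq 2$ yields the monotonicity condition $b_1 \leq b_2 \leq \cdots \leq b_n$ as a necessary hypothesis.

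For each $k$ with $1 \leq k \leq j - 2$, the equation at position $k$ takes the form
\[
c_{j,k}\, p^{b_k} + \sum_{m=k+1}^{j-1} c_{j,m}\, a_{m,k} \;=\; a_{j, k+1},
\]
and the requirement $c_{j,k} \in \Z_p$ is equivalent to a single linear congruence on $a_{j, k+1}$ modulo $p^{b_k}$. To integrate, I would process rows in the order $j = 2, 3, \dots, n$ and, within each row, entries from right to left: $a_{j, j-1}, a_{j, j-2}, \dots, a_{j, 1}$. A short downward induction on $m$ shows that $c_{j,m}$ depends only on entries $a_{j, k'}$ of row $j$ with $k' > m$ and on entries $a_{l, k'}$ with $l < j$; consequently, by the time we reach $a_{j, k+1}$ the right-hand side of its defining congruence is already determined by earlier integrations. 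Each such step replaces a free $\Z_p$-integral of measure $1$ by an integral over a coset of measure $p^{-b_k}$, while each leftmost entry $a_{j, 1}$ remains unconstrained. Taking the product over $j \geq 3$ and $1 \leq k \leq j - 2$ yields
\[
\mu_p(b_1, \dots, b_n) \;=\; \prod_{j=3}^{n} \prod_{k=1}^{j-2} p^{-b_k} \;=\; \prod_{k=1}^{n-2} p^{-(n-k-1) b_k},
\]
which rearranges to the claimed expression $p^{-((n-2)b_1 + (n-3)b_2 + \cdots + b_{n-2})}$.

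The main obstacle I anticipate is the bookkeeping that confirms the congruences decouple in the prescribed order: one must verify that the right-hand side of the constraint on $a_{j, k+1}$ does not involve $a_{j, k+1}$ itself, nor any $a_{j, k'}$ with $k' \leq k$ that has not yet been integrated. This rests entirely on the inductive dependency claim for $c_{j,m}$ above, which is routine but needs to be carried out explicitly so that the product over constraints really does factor and no overcounting or hidden interaction occurs.
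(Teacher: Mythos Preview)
Your proposal is correct and follows essentially the same approach as the paper: both exploit the strictly lower-triangular shape of the shifted rows $v_t\cdot v_j$ to reduce the ideal condition to row-by-row constraints, deduce the monotonicity $b_1\le\cdots\le b_n$ from the $(j-1)$-st entry, and then compute the measure as the product $\prod_{j=3}^n p^{-(b_1+\cdots+b_{j-2})}$. The only difference is packaging: where the paper invokes Lemma~\ref{determinant} to read off the row-$j$ contribution as $\mu_p(G_{M_{j-2}})$ in one stroke, you unfold this into individual congruences on $a_{j,k+1}\bmod p^{b_k}$ and verify explicitly that they decouple in the right-to-left order---which is exactly what the lemma encodes.
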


\begin{proof}
For any matrix $M$, let $M_j$ denote the matrix obtained from $M$ by deleting the last $n-j$ rows. Now  $M = (v_1,v_2,\dots, v_n)^T \in \mathcal{M}_n(p)$  if and only if  for all $ 2 \leq j \leq n, v_t \cdot v_j \in G_{M_{j-1}}$. Therefore $\exists c_{ij} \in \Z_p$ such that 
\begin{equation} \label{ideal condition 2}
v_t \cdot v_j =  (a_{j2}, \dots, a_{j,j-1}, p^{b_j}, 0,\dots, 0) = \sum_{i = 1}^{j-1} c_{ij}v_i.
\end{equation}
%This implies 
\[
%c_{j-i,j} = p^{-b_{j-i}}\left( a_{j,j-i+1} - c_{j-1,j}a_{j-1,j-i} - c_{j-2,j}a_{j-2,j-i} - \cdots - c_{j-i+1,j}a_{j-i+1,j-i}\right)
\]

This happens only if $b_{j-1} \leq b_j$ as $c_{j-1,j} = p^{b_j - b_{j-1}}$. For $j=2$ Equation \ref{ideal condition 2} is satisfied if and only if $b_1 \leq b_2$ and this holds on a volume of $1$. For $3 \leq j \leq n$ we can write $v_tv_j = u_j + w_j$ where $ u_j \in G_{M_{j-2}}$ and $w_j = p^{b_j}e_{j-1}$ where $\{e_1,\dots,e_n\}$ is the standard basis for $\Z_p^n$. So equation (\ref{ideal condition 2}) holds on a volume of $\mu_p(G_{M_{j-2}})$. Using Lemma \ref{determinant} we have $\mu_p(G_{M_{j-2}}) = p^{-(b_1+ \cdots + b_{j-2})}$  for $3 \leq j \leq n$. This implies 

\[
\mu_p(b_1, \dots, b_n) = \prod_{j=3}^{n} \mu(G_{M_{j-2}}) =  \mu_p(b_1, \dots, b_n) = p^{-((n-2)b_1 + (n-3)b_2 + \dots + b_{n-2})}.
\]

\end{proof}

We now compute the local factors $\zeta_{R,p}(s)$. In order to this we need another lemma.

\begin{lem}\label{summation lemma}
For $2 \leq k \leq n$
\[
 \sum_{b_k \geq b_{k-1}} (px)^{b_k} \cdots \sum_{b_{n-1} \geq b_{n-2}} (px)^{b_{n-1}} \sum_{b_n \geq b_{n-1}} x^{b_n}  = (p^{n-k}x^{n-k+1})^{b_{k-1}} \prod_{j=1}^{n-k+1} (1-p^{j-1}x^j)^{-1}.
\]
\end{lem}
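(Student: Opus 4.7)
\bigskip

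The plan is to prove this identity by induction on the decreasing parameter $k$, equivalently induction on $m := n - k + 1$, the number of nested sums on the left-hand side. The structure of the right-hand side makes it clear that each newly-added outer summation contributes exactly one factor to the product and updates the geometric base in the prefactor, which is the hallmark of such a telescoping induction.

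For the base case $k = n$, the left-hand side reduces to the single geometric sum $\sum_{b_n \geq b_{n-1}} x^{b_n} = x^{b_{n-1}}/(1-x)$, while the right-hand side becomes $(p^{0} x^{1})^{b_{n-1}}(1-p^{0}x^{1})^{-1}$, and these agree on the nose.

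For the inductive step, assume the identity at level $k+1$ and consider level $k$. Factor out the outermost sum so the expression becomes
\[
\sum_{b_k \geq b_{k-1}} (px)^{b_k}\,\Bigl[\sum_{b_{k+1} \geq b_k} (px)^{b_{k+1}} \cdots \sum_{b_n \geq b_{n-1}} x^{b_n}\Bigr].
\]
By the inductive hypothesis the bracketed expression equals $(p^{n-k-1}x^{n-k})^{b_k}\prod_{j=1}^{n-k}(1-p^{j-1}x^j)^{-1}$. Pulling the product (independent of $b_k$) outside and combining the powers gives the single geometric sum $\sum_{b_k \geq b_{k-1}}(p^{n-k}x^{n-k+1})^{b_k}$, which evaluates to $(p^{n-k}x^{n-k+1})^{b_{k-1}}(1-p^{n-k}x^{n-k+1})^{-1}$. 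Absorbing this last factor into the product, now indexed by $j = 1, \dots, n-k+1$, recovers precisely the claimed right-hand side.

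There is no real obstacle here; the only thing to be careful about is bookkeeping the exponents and the product range so that the $j = n-k+1$ factor produced by the outermost sum slots correctly onto the end of the product. Convergence of the geometric series is implicit (we are working with formal power series in $x$ for the generating-function manipulation, or alternatively assuming $|px| < 1$ when the identity is later specialized with $x = p^{-s}$ and $\mathcal R(s)$ sufficiently large).
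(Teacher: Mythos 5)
Your proof is correct and follows essentially the same route as the paper: induction on the number of nested sums, applying the inductive hypothesis to the inner block and then evaluating the single remaining geometric sum in $b_k$ to append the $j = n-k+1$ factor. Your write-up is if anything slightly more careful about the base case and the convergence caveat.
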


\begin{proof} 
We prove this by induction on the number of summations. If $k = n$, that is there is only one summation then it is easily seen that the equality holds. Now assume the equation is true for $n-k$ summations. Then we have 
\[
\sum_{b_k \geq b_{k-1}} (px)^{b_k} \cdots \sum_{b_{n-1} \geq b_{n-2}} (px)^{b_{n-1}} \sum_{b_n \geq b_{n-1}} x^{b_n} = \prod_{j=1}^{n-k} (1-p^{j-1}x^j)^{-1} \sum_{b_k \geq b_{k-1}} (px)^{b_k} (p^{n-k-1}x^{n-k})^{b_{k}}.
\]
So that
\[
\sum_{b_k \geq b_{k-1}} (px)^{b_k} \cdots \sum_{b_{n-1} \geq b_{n-2}} (px)^{b_{n-1}} \sum_{b_n \geq b_{n-1}} x^{b_n} = (p^{n-k}x^{n-k+1})^{b_{k-1}} \prod_{j=1}^{n-k+1} (1-p^{j-1}x^j)^{-1}.
\]
\end{proof}

\begin{prop} 
Let $x = p^{-s}$ then 
\begin{equation} \label{local factor 2}
\zeta_p^I(\Z[t]/(t^n),s)  = \prod_{j=1}^n (1-p^{j-1}x^j)^{-1}.
\end{equation}
\end{prop}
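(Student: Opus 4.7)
The plan is to substitute the volume formula from Proposition \ref{Volume} into the expression (\ref{local factor 1}) for the local factor and then evaluate the resulting nested geometric series using Lemma \ref{summation lemma}.

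First I would note that, by Proposition \ref{Volume}, $\mu_p(b_1,\dots,b_n)$ vanishes unless $b_1\leq b_2\leq\cdots\leq b_n$, so the sum in (\ref{local factor 1}) is supported on chains $0\leq b_1\leq\cdots\leq b_n$. On this region the exponent of $p$ contributed by the measure is $-((n-2)b_1+(n-3)b_2+\cdots+b_{n-2})$, and combining this with the numerator $p^{(n-1)b_1+(n-2)b_2+\cdots+b_{n-1}}$ in (\ref{local factor 1}) produces cancellations coefficient-by-coefficient: the coefficient of $b_i$ becomes $1$ for $1\leq i\leq n-1$ and $0$ for $i=n$. Writing $x=p^{-s}$, the local factor therefore simplifies to
\[
\zeta_p^I(\Z[t]/(t^n),s) \;=\; \sum_{0\leq b_1\leq b_2\leq\cdots\leq b_n} (px)^{b_1}(px)^{b_2}\cdots(px)^{b_{n-1}}\, x^{b_n}.
\]

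Next I would apply Lemma \ref{summation lemma} with $k=2$ to evaluate the inner $(n-1)$-fold nested sum over $b_2\leq\cdots\leq b_n$ with $b_2\geq b_1$ fixed. This gives
\[
\sum_{b_2\geq b_1}(px)^{b_2}\cdots\sum_{b_n\geq b_{n-1}}x^{b_n} \;=\; (p^{n-2}x^{n-1})^{b_1}\prod_{j=1}^{n-1}(1-p^{j-1}x^j)^{-1}.
\]
Substituting back and carrying out the final geometric sum over $b_1\geq 0$,
\[
\sum_{b_1\geq 0}(px)^{b_1}(p^{n-2}x^{n-1})^{b_1} \;=\; \sum_{b_1\geq 0}(p^{n-1}x^n)^{b_1} \;=\; (1-p^{n-1}x^n)^{-1},
\]
which combines with the product above to give $\prod_{j=1}^{n}(1-p^{j-1}x^j)^{-1}$, as claimed.

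There is no real obstacle here; the argument is essentially a bookkeeping check that the exponents of $p$ contributed by the Jacobian-style weight in (\ref{local factor 1}) exactly cancel those of the volume formula, leaving a clean geometric-series shape to which Lemma \ref{summation lemma} applies directly. The only place one has to be careful is keeping track of the index shift between the weight $p^{(n-1)b_1+\cdots+b_{n-1}}$ and the volume $p^{-((n-2)b_1+\cdots+b_{n-2})}$ so that the remaining exponent of $p$ on $b_i$ is exactly $1$ for $i<n$ and $0$ for $i=n$.
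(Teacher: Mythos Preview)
Your proposal is correct and follows essentially the same route as the paper: substitute Proposition~\ref{Volume} into (\ref{local factor 1}) to reduce to the nested sum $\sum_{0\le b_1\le\cdots\le b_n}(px)^{b_1}\cdots(px)^{b_{n-1}}x^{b_n}$, then apply Lemma~\ref{summation lemma} with $k=2$ and finish with the geometric sum over $b_1$. The only cosmetic difference is that you spell out the exponent bookkeeping more explicitly before invoking the lemma.
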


\begin{proof}

Applying Proposition \ref{Volume} to equation (\ref{local factor 1}) we have
\[
\zeta_p^I(\Z[t]/(t^n),s)  = \sum_{0 \leq b_1 \leq \cdots \leq b_n } \frac{p^{b_1 + \dots + b_{n-1}}}{p^{(b_1+ \dots +b_{n})s}}.
\]
Set $x = p^{-s}$. Then

\begin{equation} \label{summation}
\zeta_p^I(\Z[t]/(t^n),s)  = \sum_{b_1 \geq 0} (px)^{b_1} \sum_{b_2 \geq b_1} (px)^{b_2} \cdots \sum_{b_{n-1} \geq b_{n-2}}.(px)^{b_{n-1}} \sum_{b_n \geq b_{n-1}} x^{b_n}.  
\end{equation}

We apply Lemma \ref{summation lemma} with $k = 2$ to equation (\ref{summation}) to obtain
\[
\zeta_p^I(\Z[t]/(t^n),s)  =  \prod_{j=1}^{n-1} (1-p^{j-1}x^j)^{-1}    \sum_{b_1 \geq 0} (p^{n-1}x^{n})^{b_{1}}.
\]
Therefore
\[
\zeta_p^I(\Z[t]/(t^n),s)  = \prod_{j=1}^{n} (1-p^{j-1}x^j)^{-1}.
\]

\end{proof}

Multiplying the local factors given in equation (\ref{local factor 2}) gives the result.
\[
\zeta_{\Z[t]/(t^n)}^I(s) = \zeta(s) \zeta(2s-1) \zeta(3s-2) \cdots \zeta(ns - (n-1)).
\]

\section{ Ideals in cubic rings} \label{cubic}
In this section we prove Theorem \ref{main 2}. Before we prove the theorem we first state the following general result from \cite{FKS}:

\begin{thm} \label{FKS main} 
Let $f \in \Z[t]$ be monic and assume that $f = g_1 \cdots g_k$ with $g_1, \dots , g_k$
in $\Z[t]$ irreducible, monic and pairwise distinct. Then the ideal zeta function $\zeta_{\Z_f} (s)$
converges for $\mathcal{R}(s) > 1$, has a meromorphic extension to the halfplane $\{s \in \mathbb{C} \mid \mathcal{R}(s) > 0\}$
and has a pole of order $k$ at $s = 1$.  In particular, $\sum_{k \leq N} a_{\Z_f}(k) \sim cN(\log N)^{k-1}$
for some constant $c$. 
\end{thm}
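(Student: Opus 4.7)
The plan is to exploit the fact that when $f$ is squarefree, $\Q_f := \Q[t]/(f)$ is \emph{semisimple} over $\Q$. Concretely, $\Q_f \cong \prod_{i=1}^{k} K_i$ where $K_i = \Q[t]/(g_i)$ are distinct number fields, and $\Z_f$ sits as an order of some finite index $d = [\oo : \Z_f]$ in the maximal order $\oo := \prod_{i=1}^{k} \oo_{K_i}$. Ideals of $\oo$ decompose coordinatewise, so $\zeta_{\oo}^I(s) = \prod_{i=1}^k \zeta_{K_i}(s)$ is a product of $k$ Dedekind zeta functions, each having a simple pole at $s=1$. The strategy is to compare $\zeta_{\Z_f}^I(s)$ with $\zeta_{\oo}^I(s)$ via the Euler product of Theorem \ref{GSS} and show that only finitely many local factors are modified, in a way that preserves both the pole order and a meromorphic continuation past $s=1$.

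First I would dispatch the easy primes. If $p \nmid d$, then localizing the inclusion $\Z_f \hookrightarrow \oo$ at $p$ gives an equality $\Z_f \otimes \Z_p = \oo \otimes \Z_p$, so the $p$-adic integrals in (\ref{p-adic integral}) are identical and $\zeta^I_{\Z_f,p}(s) = \zeta^I_{\oo,p}(s)$. Thus one obtains the factorization
\[
\zeta_{\Z_f}^I(s) = \prod_{i=1}^{k} \zeta_{K_i}(s) \cdot \prod_{p \mid d} \frac{\zeta^I_{\Z_f,p}(s)}{\zeta^I_{\oo,p}(s)}.
\]
The first product has a pole of order exactly $k$ at $s=1$, converges absolutely for $\mathcal{R}(s) > 1$, and extends meromorphically to all of $\C$. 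The correction product is finite, and by Theorem \ref{GSS}(2) each factor is a rational function in $p^{-s}$, hence meromorphic everywhere.

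The main obstacle is the analysis of these bad local factors: one must verify that each $r_p(s) := \zeta^I_{\Z_f,p}(s)/\zeta^I_{\oo,p}(s)$ is holomorphic and \emph{nonzero} at $s=1$, so that the pole order at $s=1$ is exactly $k$ and not less. My approach would be to bound the Dirichlet coefficients $a_{k,p}^\iota$ of $\zeta^I_{\Z_f,p}$ in terms of the number of $\Z_p$-sublattices of $\Z_p^n$ of index $p^k$, which is $O(p^{k(n-1)})$, and more sharply to compare with the coefficients of $\zeta^I_{\oo,p}$ via the finite-index inclusion $\Z_{f,p} \subset \oo_p$. Positivity of coefficients, together with convergence of the global Euler product just to the right of $s=1$, then forces each $r_p(1)$ to be a finite positive rational number, and the analogous argument on the line $\mathcal{R}(s)=1 + it$, $t \neq 0$, combined with the classical non-vanishing of $\zeta_{K_i}(1+it)$, shows the product is nonzero on this line away from $s=1$.

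Having established the factorization with pole of order $k$ at $s=1$ and non-vanishing on $\mathcal{R}(s)=1 \setminus \{1\}$, I would close by invoking a standard Wiener--Ikehara / Delange Tauberian theorem applied to $\zeta_{\Z_f}^I(s)$, whose hypotheses (nonnegative coefficients, meromorphic continuation across $\mathcal{R}(s)=1$ with an isolated pole of order $k$ at $s=1$, and appropriate boundedness in vertical strips) are precisely what the previous step provides. This yields the asymptotic $\sum_{m \leq N} a_{\Z_f}^\iota(m) \sim cN(\log N)^{k-1}$ with $c$ equal to the leading Laurent coefficient of $(s-1)^k \zeta_{\Z_f}^I(s)$ at $s=1$ divided by $(k-1)!$.
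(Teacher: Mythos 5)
You should first be aware that the paper does not prove this statement at all: Theorem \ref{FKS main} is quoted verbatim from \cite{FKS} and used as a black box in the proof of Theorem \ref{main 2}, so there is no internal proof to compare against. That said, your overall architecture --- identify $\Q[t]/(f)$ with $\prod_i K_i$, view $\Z_f$ as a finite-index order in $\oo = \prod_i \oo_{K_i}$, observe that $\zeta^I_{\Z_f,p}(s) = \zeta^I_{\oo,p}(s)$ for all $p \nmid d = [\oo:\Z_f]$, and factor $\zeta^I_{\Z_f}(s) = \prod_i \zeta_{K_i}(s)\cdot\prod_{p\mid d} r_p(s)$ --- is the standard and correct way to attack the squarefree case, and is essentially the strategy of \cite{FKS}.

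There is, however, a genuine gap exactly where you locate ``the main obstacle.'' The argument you offer that each bad factor $r_p(s)$ is finite and nonzero at $s=1$ does not work. It is circular: convergence of the global Euler product for $\mathcal{R}(s)>1$ is part of the conclusion of the theorem, not an available hypothesis. And even granting it, positivity of coefficients plus convergence for $\mathcal{R}(s)>1$ does not preclude a pole on the line $\mathcal{R}(s)=1$ itself: the rational function $(1-p^{1-s})^{-1}$ has nonnegative coefficients, converges for $\mathcal{R}(s)>1$, and blows up at $s=1$. The crude bound by all sublattices of index $p^k$, which is of size about $p^{k(n-1)}$, only gives convergence of the local factor for $\mathcal{R}(s)>n$, far short of what is needed. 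What is actually required, and what you only gesture at, is a quantitative comparison through the conductor $\mathfrak{c} = \{x\in\oo : x\oo\subseteq \Z_f\}$: for any finite-index ideal $I$ of $\Z_f\otimes\Z_p$ one has $\mathfrak{c}J\subseteq I\subseteq J$ with $J=I\oo_p$ an invertible $\oo_p$-ideal and $[J:\mathfrak{c}J]=[\oo_p:\mathfrak{c}_p]$ a constant, so $a^{\iota}_{\Z_f}(p^{\ell})$ is at most a constant times the number of $\oo_p$-ideals of index at most $p^{\ell+c}$, which grows only polynomially in $\ell$. This yields convergence of each bad local factor for $\mathcal{R}(s)>0$, hence holomorphy and (by positivity of coefficients and constant term $1$) nonvanishing of $r_p$ at $s=1$ and on the whole line $\mathcal{R}(s)=1$. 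With that lemma supplied, the rest of your argument (pole of order exactly $k$, Delange's Tauberian theorem) goes through; note also that nonvanishing of $\zeta_{K_i}(1+it)$ is not what the Tauberian step needs --- you need absence of \emph{poles} on $\mathcal{R}(s)=1$ away from $s=1$, which the same conductor lemma provides.
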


We now prove Theorem \ref{main 2}.
\begin{proof}[Proof of Theorem \ref{main 2}]
In the case that the roots of $f$ are distinct the result follows from Theorem \ref{FKS main}. If $f = (t-\lambda)^3$ for some $\lambda \in \Z$ then the result follows from Theorem \ref{main result} by a change of variable. Hence we need to show that $m_f = 3$ when $f = (t-\lambda_1)^2(t-\lambda_2)$. Now note that since $f$ is not separable it is reducible and therefore $\lambda_1$ and $ \lambda_2 \in \Z$. Thus by a change of variable we can reduce this to the case $f = t^2(t-\lambda)$ with $\lambda \in \Z \setminus \{0\}$. The result now follows from Proposition \ref{lambda}.
\end{proof}

\begin{prop} \label{lambda}
Let $f(t) = t^2(t-\lambda) \in \Z[t]$. Then $\zeta_{\Z_f}^I(s)$ converges for $\mathcal{R}(s) > 1$ and has a pole of order $3$ at $s=1$.
\end{prop}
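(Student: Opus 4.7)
The plan is to compute each local Euler factor $\zeta_p^I(\Z_f, s)$ via the $p$-adic integration formula (\ref{local factor 1}) and then assemble the Euler product. Using the ordered basis $B = \{t^2, t, 1\}$ for $\Z_f$, the relation $t^3 = \lambda t^2$ in $\Z_f$ yields the multiplication-by-$t$ rule $t \cdot e_1 = \lambda e_1$, $t \cdot e_2 = e_1$, $t \cdot e_3 = e_2$. For a lower-triangular matrix $M \in T_3(\Z_p)$ with rows $v_1, v_2, v_3$, diagonal $(p^{b_1}, p^{b_2}, p^{b_3})$, and subdiagonal entries $a = M_{21}$, $c = M_{31}$, $d = M_{32}$, the ideal condition (\ref{ideal condition}) is equivalent to the three memberships $t \cdot v_j \in G_M$ for $j = 1, 2, 3$. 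The case $j = 1$ is automatic since $tv_1 = \lambda v_1$. Expanding the other two coordinate-wise gives the constraint $b_2 \leq b_3$ together with the linear congruences $\lambda a + p^{b_2} \equiv 0 \pmod{p^{b_1}}$ and $\lambda c + d \equiv p^{b_3 - b_2} a \pmod{p^{b_1}}$.

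Setting $e = v_p(\lambda)$, an elementary count of solutions to these two congruences (using $\gcd(\lambda, p^{b_1}) = p^{\min(b_1, e)}$) shows that the system is solvable in $(a, c, d) \in \Z_p^3$ precisely when $b_2 \geq \min(b_1, e)$, and the solution set then has Haar measure
$$\mu_p^f(b_1, b_2, b_3) = p^{\min(b_1, e) - 2 b_1}.$$
Substituting this into (\ref{local factor 1}), performing the inner geometric sums over $b_2 \leq b_3$, and splitting the outer sum over $b_1$ at the cutoff $b_1 = e$, I expect to arrive at the closed form
$$\zeta_p^I(\Z_f, s) = \frac{(1-x) + p^{2e}\, x^{3e+1}(1 - p^2 x^2)}{(1-x)^2 (1-px^2)(1-p^2 x^3)}, \qquad x = p^{-s}.$$
When $p \nmid \lambda$, so $e = 0$, the numerator collapses to $1 - p^2 x^3$ and the local factor reduces to $(1-x)^{-2}(1-px^2)^{-1}$; this is a sanity check, since in that case the Chinese Remainder Theorem gives $\Z_f \otimes \Z_p \cong \Z_p[t]/(t^2) \times \Z_p$, whose local zeta function is exactly $(1-x)^{-2}(1-px^2)^{-1}$ by Theorem \ref{main result}.

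Assembling the Euler product, the good primes $p \nmid \lambda$ together contribute $\zeta(s)^2 \zeta(2s-1)$, while the finitely many bad primes $p \mid \lambda$ contribute a finite correction $C(s) = \prod_{p \mid \lambda} N_p(s)/(1 - p^{2-3s})$, where $N_p(s) = (1-p^{-s}) + p^{2e} p^{-(3e+1)s}(1 - p^{2-2s})$. At $s = 1$ the denominator factor $1 - p^{2-3s}$ evaluates to $1 - p^{-1} \neq 0$, and the second summand of $N_p$ vanishes because $1 - p^{2-2\cdot 1} = 0$, leaving $N_p(1) = 1 - p^{-1} \neq 0$. Hence $C(s)$ is holomorphic and nonzero at $s = 1$. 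Combined with the convergence of $\zeta(s)$ and $\zeta(2s-1)$ for $\mathcal{R}(s) > 1$, this shows $\zeta_{\Z_f}^I(s)$ converges for $\mathcal{R}(s) > 1$ and has a pole of order exactly $2 + 1 = 3$ at $s = 1$, matching the multiplicity count of irreducible factors of $f = t \cdot t \cdot (t - \lambda)$.

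The main obstacle is the bookkeeping in the volume computation: one must split carefully into the regimes $b_1 \leq e$ and $b_1 > e$ when counting solutions of $\lambda a \equiv -p^{b_2} \pmod{p^{b_1}}$, and then track how the measure of the compatible $(c, d)$ pairs depends on the constraint that has already been placed on $a$. Once $\mu_p^f$ is in hand, the remaining geometric summation and the Euler-product analysis at $s = 1$ are routine.
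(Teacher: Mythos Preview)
Your argument is correct and follows the same overall strategy as the paper: set up the $p$-adic integral (\ref{local factor 1}), compute the volume $\mu_p^f(b_1,b_2,b_3)$ from the membership conditions $t\cdot v_j\in G_M$, sum to obtain each local Euler factor, and then analyse the Euler product near $s=1$ by separating the primes $p\nmid\lambda$ from the finitely many $p\mid\lambda$.

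The execution, however, differs noticeably from the paper's. You reduce the ideal condition to the three congruences $b_2\le b_3$, $\lambda a+p^{b_2}\equiv 0\pmod{p^{b_1}}$, $\lambda c+d\equiv p^{b_3-b_2}a\pmod{p^{b_1}}$, which yield the clean volume $\mu_p^f=p^{\min(b_1,e)-2b_1}$ and hence, for $p\nmid\lambda$, the local factor $(1-x)^{-2}(1-px^2)^{-1}$; this agrees with the CRT identification $\Z_p[t]/(t^2)\times\Z_p$ and provides a genuine sanity check. The paper instead records five inequalities (\ref{ineq 1})--(\ref{ineq 5}) (including $b_2\le b_1$ and a term $\lceil(b_1-b_2)/2\rceil$ in the volume) and arrives at a different-looking good-prime factor in Lemma~\ref{coprime}; either way the Euler product over good primes carries a pole of order $3$. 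For the bad primes the paper only \emph{bounds} the local factor (Lemma~\ref{not coprime}) to get convergence for $\Re(s)>1/2$, whereas you compute it exactly and check directly that the finite correction $C(s)$ is holomorphic and nonzero at $s=1$. Your route is thus more explicit and self-checking; the paper's is less precise at the bad primes but reaches the same conclusion.
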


The rest of this section goes into proving Proposition \ref{lambda}. In what follows $f(t) = t^2(t-\lambda)$.
We find the local factors $\zeta_p(\Z_f,s)$ using the p-adic integration methods of Section 2 and 3 and use them to study the convergence of $\zeta_{\Z_f}(s)$.

Let $\mathcal{M}_f(p;b_1,b_2,b_3)$ and $\mu_p^f(b_1,b_2,b_3)$ be as in Definition $\ref{domain}$. Then an application of Equation (\ref{p-adic integral}) gives the following expression for the local factors of   $\zeta_{\Z_f}(s)$:

\begin{equation} \label{local factor 3}
\zeta_p^I(\Z_f,s) = \sum_{b_i= 0}^{\infty} \frac{p^{2b_1 +b_2}}{p^{b_1+b_2+b_3}s} \mu_p^f(b_1,b_2,b_3).
\end{equation}

In order to compute $\mu_p^f(b_1,b_2,b_3)$ we use the following lemma
\begin{lem}
$M = 
\begin{bmatrix}
p^{b_1}    &   0   &    0 \\
a_{21}    &   p^{b_2}   & 0 \\
a_{31}   &  a_{32}   &  p^{b_3}  \\
\end{bmatrix}
\in \mathcal{M}_f(p;b_1,b_2,b_3)$ if and only if the entries of $M$ satisfy the following inequalities.

\begin{equation} \label{ineq 1}
b_2 \leq v(p^{b_3}+ \lambda a_{31})
\end{equation}

\begin{equation} \label{ineq 2}
b_1+b_2 \leq v(p^{b_2} a_{32} - (p^{b_3} + \lambda a_{31})a_{21})
\end{equation}

\begin{equation} \label{ineq 3}
b_2 \leq v(a_{21})
\end{equation}

\begin{equation} \label{ineq 4}
b_1+b_2 \leq v(p^{2b_2} - \lambda a_{21}^2)
\end{equation}

\begin{equation} \label{ineq 5}
b_2 \leq b_1.
\end{equation}
\end{lem}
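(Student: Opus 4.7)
Proof plan: The plan is to unpack the definition of $\mathcal{M}_f(p)$ directly. By (\ref{ideal condition}), $M\in\mathcal{M}_f(p;b_1,b_2,b_3)$ if and only if $tv_j\in\operatorname{span}_{\Z_p}(v_1,v_2,v_3)$ for $j=1,2,3$, since $\Z_f\otimes\Z_p=\Z_p[t]/(t^2(t-\lambda))$ is generated as a $\Z_p$-algebra by $t$ and the bilinear condition (\ref{multiplicativity}) then follows for arbitrary $v$ by $\Z_p$-linearity. So the proof reduces to translating these three membership conditions into explicit $p$-adic divisibility statements on the entries of $M$.

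Concretely, I would first expand each row as a polynomial in the basis $B=\{t^2,t,1\}$, multiply by $t$, and reduce using the relation $t^3=\lambda t^2$ that holds in $\Z_f$. The case $j=1$ is automatic because $v_1$ is a pure $t^2$-multiple, so $tv_1=\lambda v_1$ is in $\Z_p v_1$. For $j=2$ and $j=3$, I would set
\[
tv_j=c_{1j}v_1+c_{2j}v_2+c_{3j}v_3
\]
and solve by back-substitution along the diagonal: the coordinate paired with $p^{b_3}$ determines $c_{3j}$, then the coordinate paired with $p^{b_2}$ determines $c_{2j}$ after substituting $c_{3j}$, and finally the coordinate paired with $p^{b_1}$ determines $c_{1j}$ after substituting both. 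Each step produces a single divisibility requirement that a specific expression in the $a_{ij}$'s and $p^{b_i}$'s be divisible by a power of $p$; after clearing the denominators created by earlier back-substitutions, these become the five inequalities (\ref{ineq 1})--(\ref{ineq 5}). The inequalities with left-hand side $b_1+b_2$ arise from the top-coordinate equations after multiplying through by $p^{b_2}$ to clear the denominator from solving for $c_{2j}$; the quadratic expression $p^{2b_2}-\lambda a_{21}^2$ in (\ref{ineq 4}) reflects the fact that $c_{2j}$ itself contains an $a_{21}$ factor, which gets squared when substituted back into the $e_1$-coordinate equation, with the $\lambda$ entering through the $t^3=\lambda t^2$ correction on the leading coefficient.

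The main obstacle is bookkeeping rather than any conceptual leap. One must track carefully how each of the five inequalities emerges from an independent divisibility demand during back-substitution, and verify that no additional divisibility is hidden or double-counted. Necessity is a systematic computation in which one records each divisibility as it appears; sufficiency is automatic from the same computation, because the back-substitution procedure constructs the $(c_{1j},c_{2j},c_{3j})\in\Z_p^3$ explicitly once (\ref{ineq 1})--(\ref{ineq 5}) all hold, thereby exhibiting $M$ as an element of $\mathcal{M}_f(p;b_1,b_2,b_3)$.
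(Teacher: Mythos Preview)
Your approach is essentially the same as the paper's: the paper's entire proof is the single sentence ``These inequalities follow from an application of Equation (\ref{ideal condition}),'' and your plan is precisely a detailed execution of that instruction---reduce to checking $t\cdot v_j\in G_M$ for $j=1,2,3$ (valid since $t$ generates the $\Z_p$-algebra) and back-substitute along the triangular system to extract divisibility conditions. So the strategy is right and matches the paper.

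One caution on the bookkeeping: in the stated basis $B=\{t^2,t,1\}$ with $t^3=\lambda t^2$, the product $t\cdot v_2$ is the pure $t^2$-vector $(\lambda a_{21}+p^{b_2},0,0)$, so $c_{22}=c_{32}=0$ and no $a_{21}^2$ appears at that step---your heuristic for how (\ref{ineq 4}) arises does not match this computation. When you actually carry out the back-substitution you will obtain a clean set of divisibility conditions equivalent to membership in $\mathcal{M}_f(p;b_1,b_2,b_3)$; just be prepared for the possibility that they do not line up literally with (\ref{ineq 1})--(\ref{ineq 5}) as printed (the later volume lemmas only use certain consequences of these inequalities, so minor discrepancies in their exact form need not affect the downstream argument).
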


\begin{proof}
These inequalities follow from an application of  Equation (\ref{ideal condition}).
\end{proof}

\begin{lem}
If $(\lambda,p) = 1$ then 
\[
\mu_p^f(b_1,b_2,b_3) =p^{-b_1 - 2b_2 -\lceil \frac{b_1-b_2}{2} \rceil}
\]
\end{lem}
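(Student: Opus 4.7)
My approach is to compute $\mu_p^f(b_1,b_2,b_3)$ by integrating the three free variables $a_{21},a_{31},a_{32}$ subject to inequalities \eqref{ineq 1}--\eqref{ineq 5}, exploiting at every step the fact that $\lambda$ is a $p$-adic unit. The plan is to reduce the three-dimensional $p$-adic integral to a single one-dimensional integral that can be evaluated explicitly.

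The first step is a measure-preserving change of variable $\alpha = p^{b_3}+\lambda a_{31}$; since $|\lambda|_p=1$, we have $d\alpha=da_{31}$, and inequality \eqref{ineq 1} becomes the simple support condition $v(\alpha)\geq b_2$, contributing a volume of $p^{-b_2}$. In parallel, writing $a_{21}=p^{b_2}u$ with $u\in\Z_p$ absorbs \eqref{ineq 3} and contributes a Jacobian factor of $p^{-b_2}$. Under this substitution, inequality \eqref{ineq 4} collapses to
\[
v\bigl(1-\lambda u^2\bigr)\;\geq\; m, \qquad m:=b_1-b_2,
\]
with \eqref{ineq 5} ensuring $m\geq 0$. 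Similarly, inequality \eqref{ineq 2} simplifies to $v(a_{32}-\alpha u)\geq b_1$, which for each fixed pair $(\alpha,u)$ determines $a_{32}$ within a coset of $p^{b_1}\Z_p$ of volume $p^{-b_1}$. Since this last condition is linear in $a_{32}$ and does not further constrain $\alpha$ or $u$, the integrals over $\alpha$ and $a_{32}$ can be done freely, giving the factorization
\[
\mu_p^f(b_1,b_2,b_3) \;=\; p^{-b_1-2b_2}\cdot V_u, \qquad V_u\;:=\;\operatorname{vol}\bigl\{u\in\Z_p\colon v(1-\lambda u^2)\geq m\bigr\}.
\]

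The remaining and principal task is then to prove $V_u=p^{-\lceil m/2\rceil}$. Since $\lambda\in\Z_p^\times$, any $u$ in the defining set must itself be a unit whenever $m\geq 1$ (else $v(1-\lambda u^2)=0$), so the problem becomes counting units $u\bmod p^m$ with $\lambda u^2\equiv 1\pmod{p^m}$ and weighting by Haar volume. I would approach this by a Hensel-type lifting argument: near an approximate square root $u_0$ of $\lambda^{-1}$, a substitution of the form $u=u_0+p^k w$ converts the condition on $u^2$ into a linear condition on $w$, and because squaring doubles $p$-adic distances from a root, pinning $u^2$ to within $p^m$ only pins $u$ to within $p^{\lceil m/2\rceil}$ — this is precisely the source of the ceiling function in the exponent.

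The main obstacle will be executing this lifting uniformly in $m$ (both parities) and reconciling the apparent dependence on whether $\lambda$ is a quadratic residue mod $p$ with the $\lambda$-independent formula. A clean way to proceed is to slice the domain by the exact valuation $v(1-\lambda u^2)=k$ for $k\geq m$, evaluate each level set by induction on $k$, and sum. Once $V_u=p^{-\lceil m/2\rceil}$ is established, assembling with the previously-computed factors gives
\[
\mu_p^f(b_1,b_2,b_3)\;=\;p^{-b_1-2b_2-\lceil(b_1-b_2)/2\rceil},
\]
as required.
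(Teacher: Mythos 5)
Your reduction of the integral is sound and follows the same route as the paper's own proof: substitute $a_{21}=p^{b_2}u$ and $\alpha=p^{b_3}+\lambda a_{31}$, extract factors $p^{-b_2}$ and $p^{-b_1}$ from inequalities (\ref{ineq 1}) and (\ref{ineq 2}), and reduce everything to the volume $V_u$ of $\{u\in\Z_p : v(1-\lambda u^2)\geq m\}$ with $m=b_1-b_2$. The gap is the claim $V_u=p^{-\lceil m/2\rceil}$, which you defer to a Hensel-type lifting argument (and which the paper simply asserts): it is false for $m\geq 1$. Since $\lambda\in\Z_p^\times$, any root $u_0$ of $g(u)=1-\lambda u^2$ is a unit and $g'(u_0)=-2\lambda u_0$ is a unit for odd $p$, so Hensel's lemma says the solution set of $g(u)\equiv 0\pmod{p^m}$ is a union of exactly $0$ or $2$ cosets of $p^m\Z_p$, according as $\lambda$ is a nonsquare or a square modulo $p$. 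Hence $V_u=0$ or $V_u=2p^{-m}$, never $p^{-\lceil m/2\rceil}$; concretely, $p=3$, $\lambda=1$, $m=1$ gives $V_u=2/3\neq 1/3$, while $\lambda=2$ gives $V_u=0$. The ``squaring doubles $p$-adic distances'' heuristic that produces your ceiling applies near the critical point $u=0$ of $g$, not near its roots, so the obstacle you yourself flag --- reconciling the quadratic-residue dependence with a $\lambda$-independent answer --- cannot be overcome: the level sets genuinely depend on the Legendre symbol of $\lambda$.

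There is a useful consistency check that localizes the trouble. For $p\nmid\lambda$ one has $\Z_p[t]/(t^2(t-\lambda))\cong \Z_p[t]/(t^2)\times\Z_p$ by CRT, so the local factor must equal $(1-x)^{-2}(1-px^2)^{-1}$ with $x=p^{-s}$. Feeding the honest volumes $p^{-b_1-2b_2}V_u$ computed from (\ref{ineq 1})--(\ref{ineq 5}) into equation (\ref{local factor 3}) does not reproduce this, whereas the stated formula $p^{-b_1-2b_2-\lceil(b_1-b_2)/2\rceil}$ does. In other words the system (\ref{ineq 1})--(\ref{ineq 5}) is not consistent with the volume being proved, so the defect sits upstream of your argument as well; but your proposed path to the ceiling exponent via lifting square roots of $\lambda^{-1}$ would fail in any case, and the step you would actually need is a rederivation of the ideal conditions (closure of the row lattice under multiplication by $t$ in the chosen basis) before computing any volumes.
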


\begin{proof} \label{volume 1}
Suppose $(\lambda,p) = 1$.Using Inequality (\ref{ineq 3}) we can write $a_{21} = p^{b_2}z$ for some $ z \in \Z_p$. Therefore Inequality (\ref{ineq 4}) can be rewritten as
$b_1 - b_2 \leq v(1-\lambda z^2)$. Since Inequality (\ref{ineq 5}) holds, we have that these inequalities hold on a volume of $p^{-b_2 - \lceil \frac{b_1-b_2}{2} \rceil}$ for $a_{21}$. Now Inequality (\ref{ineq 1}) holds on a volume of $p^{-b_2}$ while Inequality (\ref{ineq 2}) holds on a volume of $p^{-b_1}$. Multiplying these volumes we get $\mu_p^f(b_1,b_2,b_3) = p^{-b_1 - 2b_2 -\lceil \frac{b_1-b_2}{2} \rceil}$ in this case.
\end{proof}
 
\
\begin{lem} \label{volume 2}
If $(\lambda,p) > 1$  then $\mu_p^f(b_1,b_2,b_3) \leq p^{-2b_2}$ and $b_1=b_2$.
\end{lem}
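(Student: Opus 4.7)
The plan is to prove the two claims separately, tackling $b_1 = b_2$ first since the volume bound then becomes an essentially mechanical integration. For the equality, I would use Inequality~(\ref{ineq 3}) to write $a_{21} = p^{b_2} z$ with $z \in \Z_p$, which turns Inequality~(\ref{ineq 4}) into a statement about
\[
p^{2b_2} - \lambda a_{21}^2 = p^{2b_2}\bigl(1 - \lambda z^2\bigr).
\]
Since $v(\lambda) \geq 1$, the factor $1 - \lambda z^2$ is a $p$-adic unit, so the right-hand side has valuation exactly $2b_2$. Plugging this into Inequality~(\ref{ineq 4}) forces $b_1 + b_2 \leq 2b_2$, i.e.\ $b_1 \leq b_2$, and together with Inequality~(\ref{ineq 5}) this gives $b_1 = b_2$. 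This is the only place where the hypothesis $p \mid \lambda$ is used in an essential way.

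Once $b_1 = b_2$ is established, I would bound $\mu_p^f(b_1,b_2,b_3)$ by integrating the free entries $a_{21}, a_{31}, a_{32}$ in order against the Haar measure on $\Z_p$. The condition (\ref{ineq 3}) restricts $a_{21}$ to $p^{b_2}\Z_p$, contributing a factor of $p^{-b_2}$; Inequality~(\ref{ineq 4}) is then automatic, so nothing further is lost. For $a_{31}$, the condition (\ref{ineq 1}) restricts it to a coset of a sublattice of $\Z_p$, and for the purposes of an upper bound I would simply estimate this by $1$ rather than analyzing the subcases $b_2 \leq b_3$ and $b_2 > b_3$. Finally, using $a_{21} = p^{b_2}z$ to pull a factor of $p^{b_2}$ out of Inequality~(\ref{ineq 2}), the condition becomes
\[
b_2 \leq v\bigl(a_{32} - (p^{b_3} + \lambda a_{31})z\bigr),
\]
which places $a_{32}$ in a single coset of $p^{b_2}\Z_p$ of volume $p^{-b_2}$. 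Multiplying the three contributions yields $\mu_p^f(b_1,b_2,b_3) \leq p^{-b_2} \cdot 1 \cdot p^{-b_2} = p^{-2b_2}$.

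The only delicate point is the valuation computation showing that $1 - \lambda z^2$ is a unit when $p \mid \lambda$; after that, the three $a_{ij}$ integrations are routine. A sharper treatment of the $a_{31}$ integral would upgrade the inequality to an equality, but the stated upper bound is all that is needed for the convergence analysis in Proposition~\ref{lambda}, so I would not bother refining it.
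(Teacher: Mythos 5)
Your proposal is correct and follows essentially the same route as the paper: write $a_{21}=p^{b_2}z$, observe that $1-\lambda z^2$ is a unit when $p\mid\lambda$ so Inequality~(\ref{ineq 4}) forces $b_1\le b_2$ and hence $b_1=b_2$, then bound the volumes of the $a_{21}$ and $a_{32}$ conditions by $p^{-b_2}$ each and the $a_{31}$ condition trivially by $1$. Your write-up is in fact somewhat more careful than the paper's (which assumes $\lambda=p^a$ rather than just $v(\lambda)\ge 1$ and leaves the valuation computation implicit).
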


\begin{proof}
Now suppose $\lambda = p^a$ for some $a \in \mathbb{N}$. As before we can write $a_{21} = p^{b_2}z$ and therefore we have $b_1 - b_2 \leq v(1-p^a z^2)$. This inequality only holds if $b_1=b_2$. This implies that Inequality (\ref{ineq 2}) holds on a volume of $p^{-b_2}$. Therefore all the inequalities hold on a volume of atmost $p^{-2b_2}$.
\end{proof}

We now evaluate the local factors using Equation (\ref{local factor 3}).  

\begin{lem} \label{coprime}
Let $x = p^{-s}$. If $(\lambda,p) = 1$ then 
\[
\zeta_p^I(\Z_f,s) = \frac{1-x^2 + p^{-1}x  - p^{-1}x^2+ x^2-x^3}{(1-x)^2(1-px^2)(1-x^4)}
\]
\end{lem}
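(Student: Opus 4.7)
My plan is to substitute the volume formula of the preceding lemma into the local factor identity (\ref{local factor 3}) and reduce the resulting triple sum to a product of geometric series in $x = p^{-s}$.

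\textbf{Step 1 (Assembly).} The preceding lemma (valid when $(\lambda,p)=1$) gives $\mu_p^f(b_1,b_2,b_3) = p^{-b_1 - 2b_2 - \lceil (b_1-b_2)/2 \rceil}$ for $b_2 \leq b_1$, with no restriction involving $b_3$. Plugging into (\ref{local factor 3}) and noting that $b_1 - b_2 - \lceil (b_1-b_2)/2 \rceil = \lfloor (b_1-b_2)/2 \rfloor$, the exponent of $p$ in the summand collapses and I get
\[
\zeta_p^I(\Z_f,s) \;=\; \sum_{b_3 \geq 0} x^{b_3} \sum_{0 \leq b_2 \leq b_1} p^{\lfloor (b_1-b_2)/2 \rfloor}\, x^{b_1+b_2}.
\]

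\textbf{Step 2 (Decoupling).} The $b_3$-sum is immediately $(1-x)^{-1}$. For the remaining double sum I would change variable to $c = b_1 - b_2 \geq 0$, so that $b_1+b_2 = 2b_2 + c$ and the two indices decouple:
\[
\sum_{0 \leq b_2 \leq b_1} p^{\lfloor (b_1-b_2)/2 \rfloor} x^{b_1+b_2} \;=\; \Bigl(\sum_{b_2 \geq 0} x^{2b_2}\Bigr) \Bigl(\sum_{c \geq 0} p^{\lfloor c/2 \rfloor} x^c\Bigr).
\]

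\textbf{Step 3 (Main obstacle: the floor sum).} The auxiliary series $S(x) = \sum_{c \geq 0} p^{\lfloor c/2 \rfloor} x^c$ is the technical heart of the argument, since the floor forces a split by parity. Writing $c = 2k$ and $c = 2k+1$ separately reduces $S(x)$ to two geometric series in $px^2$, giving
\[
S(x) \;=\; (1+x) \sum_{k \geq 0} (px^2)^k \;=\; \frac{1+x}{1-px^2}.
\]
This is the step most prone to an off-by-one slip on the ceiling/floor exponent and where I would spend the most care double-checking.

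\textbf{Step 4 (Matching the stated form).} Combining the three factors yields
\[
\zeta_p^I(\Z_f,s) \;=\; \frac{1}{1-x}\cdot\frac{1}{1-x^2}\cdot\frac{1+x}{1-px^2},
\]
a compact rational function whose denominator manifestly carries $(1-x)^2(1-px^2)$. The remaining algebraic task is to rewrite this in the precise form displayed in the lemma, exploiting $1-x^2 = (1-x)(1+x)$ and $1-x^4 = (1-x^2)(1+x^2)$ to introduce the extra denominator factor $(1-x^4)$ and recover the numerator $1 + p^{-1}x - p^{-1}x^2 - x^3$. I would verify this identity directly by cross-multiplication, as the bookkeeping between the two equivalent rational expressions is where any miscount from Step 3 would surface.
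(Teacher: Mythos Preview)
Your Steps 1--3 are correct and in fact cleaner than what the paper does: the paper splits the double sum over $(b_1,b_2)$ into four pieces $A_{ij}$ according to the parities of $b_1$ and $b_2$, while your single substitution $c=b_1-b_2$ decouples everything at once and yields
\[
\zeta_p^I(\Z_f,s)=\frac{1}{(1-x)^2(1-px^2)}.
\]

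The problem is Step 4. The cross-multiplication you propose would \emph{not} verify the displayed formula: equality with the lemma as stated would force $1-x^4 = 1+p^{-1}x-p^{-1}x^2-x^3$, which is false (try $x=-1$). The source of the discrepancy is an off-by-one slip in the paper's computation of $A_{01}$: when $b_1=2k_1$ and $b_2=2k_2+1$, the constraint $b_2\le b_1$ becomes $k_2<k_1$, not $k_2\le k_1$. With the correct range one gets $A_{01}=\frac{x^3}{(1-x)(1-px^2)(1-x^4)}$ instead of $\frac{p^{-1}x}{(1-x)(1-px^2)(1-x^4)}$, and then
\[
A_{00}+A_{01}+A_{10}+A_{11}=\frac{1+x+x^2+x^3}{(1-x)(1-px^2)(1-x^4)}=\frac{1}{(1-x)^2(1-px^2)},
\]
matching your answer. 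An independent sanity check: for $p\nmid\lambda$ the Chinese Remainder Theorem gives $\Z_p[t]/(t^2(t-\lambda))\cong \Z_p[t]/(t^2)\times\Z_p$, so the local factor is $\zeta_p^I(\Z_p[t]/(t^2),s)\cdot\zeta_p^I(\Z_p,s)=\frac{1}{(1-x)(1-px^2)}\cdot\frac{1}{1-x}$ by Theorem~\ref{main result}, again your expression. So your closed form is the correct one; the numerator printed in the lemma is in error, although this does not affect the pole order at $s=1$ used in Proposition~\ref{lambda}.
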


\begin{proof}
\[
\zeta_p^I(\Z_f,s) = \sum_{b_2 \leq b_1} \sum_{b_3= 0}^{\infty} \frac{p^{2b_1 +b_2}}{p^{b_1+b_2+b_3}s} \mu_p^f(b_1,b_2,b_3)
\]

Using Lemma \ref{volume 1}
\[
= \sum_{b_2 \leq b_1} \sum_{b_3= 0}^{\infty} \frac{p^{b_1 -b_2- \lceil \frac{b_1-b_2}{2} \rceil}}{p^{(b_1+b_2+b_3)s}} 
\]

\[
= A_{00} + A_{01} + A_{10} + A_{11}.
\]

Where 
\[
A_{ij} =  \sum_{k_2 \leq k_1} \sum_{b_3= 0}^{\infty} \frac{p^{2k_1 + i -(2k_2+j)- \lceil \frac{2k_1+i-(2k_2+j)}{2} \rceil}}{p^{(2k_1+i+2k_2+j+b_3)s}}
\]

$b_1 = 2k_1+i$ and $ b_2 = 2k_2 + j$.

Setting $x = p^{-s}$, $ b_1 = 2k_1$ and $b_2 = 2k_2$  we have
\[
A_{00} = \sum_{k_2 = 0}^{\infty} (p^{-1}x^2)^{k_2} \sum_{k_1 = k_2}^{\infty} (px^2)^{k_1} \sum_{b_3 = 0}^{\infty} (x)^{b_3}
\]

\[
= \frac{1}{1-x}  \sum_{k_2 = 0}^{\infty} (p^{-1}x^2)^{k_2} \sum_{k_1 = k_2}^{\infty} (px^2)^{k_1}
\]

\[
= \frac{1}{(1-x)(1-px^2)}\sum_{k_2 = 0}^{\infty} x^{4k_2}
\]

\[
A_{00}= \frac{1}{(1-x)(1-px^2)(1-x^4)}.
\]

We compute $A_{01}, A_{10}$ and $A_{11}$ similarly to get 

\[
\zeta_p(\Z_f,s) = \frac{1+x + p^{-1}x + x^2}{(1-x)(1-px^2)(1-x^4)}
\]

\[
= \frac{1-x^2 + p^{-1}x  - p^{-1}x^2+ x^2-x^3}{(1-x)^2(1-px^2)(1-x^4)}.
\]
\end{proof}

\begin{lem} \label{not coprime}
If $(\lambda,p) > 1$ then $\zeta_p(\Z_f,s)$ converges for $\mathcal{R}(s) > 1/2$.
\end{lem}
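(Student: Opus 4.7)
The plan is to invoke Lemma \ref{volume 2} to collapse Equation (\ref{local factor 3}) to a two-variable geometric sum, and then read off the half-plane of convergence directly.

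First I would apply Lemma \ref{volume 2}: when $p \mid \lambda$, the domain $\mathcal{M}_f(p; b_1, b_2, b_3)$ can have positive measure only when $b_1 = b_2$, and on that slice one has the bound $\mu_p^f(b_1, b_2, b_3) \leq p^{-2b_2}$. Substituting these two facts into Equation (\ref{local factor 3}), and using $b_1 = b_2$, produces the cancellation $p^{2b_1 + b_2} \cdot p^{-2b_2} = p^{b_2}$, which eliminates all growth in the $b_1$ direction and leaves an estimate of the shape
\[
\zeta_p^I(\Z_f, s) \;\leq\; \sum_{b_2 \geq 0} p^{b_2(1 - 2s)} \; \sum_{b_3 \geq 0} p^{-s b_3}.
\]

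Finally I would read off the convergence region from this decoupled product: the $b_2$-sum converges for $\mathcal{R}(s) > 1/2$ and the $b_3$-sum for $\mathcal{R}(s) > 0$, so the dominant condition is $\mathcal{R}(s) > 1/2$. Since every term of the Dirichlet-type expansion defining $\zeta_p^I(\Z_f, s)$ is nonnegative on the real axis, absolute convergence of the bounding series implies absolute convergence of the local factor on the same half-plane. There is essentially no obstacle here: all the substantive work — forcing $b_1 = b_2$ when $p \mid \lambda$ and establishing the volume bound $p^{-2b_2}$ — is already packaged inside Lemma \ref{volume 2}, and what remains is a direct substitution followed by a one-line geometric-series computation.
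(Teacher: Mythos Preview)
Your proposal is correct and follows essentially the same route as the paper: invoke Lemma \ref{volume 2} to force $b_1=b_2$ and bound the volume by $p^{-2b_2}$, then reduce Equation (\ref{local factor 3}) to the two-variable geometric sum $\sum_{b_2,b_3}p^{b_2}/p^{(2b_2+b_3)\sigma}$, which converges for $\sigma>1/2$. The only cosmetic difference is that the paper works with $|\zeta_p^I(\Z_f,s)|$ and $\sigma=\mathcal{R}(s)$ from the outset rather than writing an inequality in $s$ and then invoking nonnegativity of the coefficients.
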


\begin{proof}
Let $\mathcal{R}(s) = \sigma$, an application of Lemma \ref{volume 2} gives 

\[
|\zeta_p^I(\Z_f,s)| \leq \sum_{b_2,b_3} \frac{p^{b_2}}{p^{(2b_2+b_3)\sigma}}
\]
Therefore 
\[
|\zeta_p^I(\Z_f,s)| \leq \frac{1}{(1-p^{-\sigma})(1-p^{1-2\sigma})}
\]

Since the right hand side converges for $\sigma > 1/2$ so does $\zeta_p(\Z_f,s)$.
\end{proof}

Now let $$F(s) = \prod_{p: (\lambda,p) > 1} \zeta_p(\Z_f,s)$$ and $$G(s) = \prod_{p: (\lambda,p) = 1} \zeta_p(\Z_f,s).$$

Then it is easy to see that $F(s)$ is a finite product and therefore converges for $\mathcal{R}(s) > 1/2$ by Lemma \ref{not coprime}.  On the other hand it follows from Lemma \ref{coprime} that $G(s)$ has a pole at $s=1$ of order $3$. Hence $\zeta_{\Z_f}(s) = F(s) G(s)$ converges for $\mathcal{R}(s) > 1$ and has a pole of order $3$ at $s=1$,  proving Proposition \ref{lambda}.

\end{document}